\numberwithin{equation}{section}
\newtheorem{theorem}{Theorem}
\newtheorem{lemma}[theorem]{Lemma}
\theoremstyle{definition}
\newtheorem{definition}[theorem]{Definition}
\newtheorem{proposition}[theorem]{Proposition}
\newtheorem{question}[theorem]{Question}
\theoremstyle{remark}
\newtheorem{remark}[theorem]{Remark}
\begin{document}

\title[]{Answering an Open Problem on $T$-Norms for Type-2 Fuzzy Sets}

\author[X. Wu]{Xinxing Wu}
\address[X. Wu]{School of Sciences, Southwest Petroleum University, Chengdu, Sichuan 610500, China}
\email{wuxinxing5201314@163.com}

\author[G. Chen]{Guanrong Chen}
\address[G. Chen]{Department of Electrical Engineering, City University of Hong Kong,
Hong Kong SAR, China}
\email{gchen@ee.cityu.edu.hk}

\thanks{This work was supported by the National Natural Science Foundation of China
(No. 11601449), the Science and Technology Innovation Team of Education Department of Sichuan for Dynamical Systems and its
Applications (No. 18TD0013), and the Youth Science and Technology Innovation Team of Southwest
Petroleum University for Nonlinear Systems (No. 2017CXTD02).}


\date{\today}


\keywords{Normal and convex function, ${t}$-norm, ${t}$-conorm, ${t_{r}}$-norm, ${t_r}$-conorm, type-2 fuzzy set.}

\begin{abstract}
This paper proves that a binary operation ${\star}$ on ${[0, 1]}$, ensuring that the binary operation
${\curlywedge}$ is a ${t}$-norm or ${\curlyvee}$ is a ${t}$-conorm, is a ${t}$-norm,
where ${\curlywedge}$ and ${\curlyvee}$ are special convolution operations defined by
$$
{(f\curlywedge g)(x)=\sup\left\{f(y)\star g(z): y\vartriangle z=x\right\},}
$$
$$
{(f\curlyvee g)(x)=\sup\left\{f(y)\star g(z): y\ \triangledown\ z=x\right\},}
$$
for any ${f, g\in Map([0, 1], [0, 1])}$, where ${\vartriangle}$ and ${\triangledown}$ are
a continuous ${t}$-norm and a continuous ${t}$-conorm on ${[0, 1]}$, answering negatively an open
problem posed in \cite{HCT2015}. Besides, some characteristics of ${t}$-norm and ${t}$-conorm
are obtained in terms of the binary operations ${\curlywedge}$ and ${\curlyvee}$.
\end{abstract}

\maketitle

\section{Introduction}

In 1975, Zadeh~\cite{Z1975} introduced the notion of type-2 fuzzy sets (T2FSs) -- that is, fuzzy set with fuzzy sets as truth values
(simply, ``fuzzy-fuzzy sets'') -- being an extension of type-1 fuzzy sets (FSs) and interval-valued fuzzy sets (IVFSs), which was also equivalently
expressed in different forms by Mendel {\it et al.} (\cite{KM2001}--\cite{MJ2002}). Because the truth values of T2FSs are fuzzy, they are more adaptable to a
further study of uncertainty than FSs and have been applied in many studies (\cite{M2007}--\cite{TLSR2015}).
Mendel \cite{M2007} summarized some important advances for FSs and T2FSs from 2001 to 2007.
Hu and Kwong~\cite{HK2013} discussed $t$-norm operations of T2FSs and obtained a few properties of type-2 fuzzy numbers.
For better understanding of T2FSs, Aisbett {\it et al.} \cite{ARM2010} translates their constructs to the language of functions in spaces.
Chen and Wang \cite{CW2013} used T2FSs to give a new technique for fuzzy multiple attributes decision making. Sola {\it et al.}~\cite{S2015} provided
a more general perspective for interval T2FSs and showed that IVFSs can be viewed as a special case of interval T2FSs.
Ruiz {\it et al.}~\cite{RH2016} obtained two results for join and meet operations for T2FSs with arbitrary secondary
memberships. Recently, Wang ~\cite{W2017} introduced the notion of conditional fuzzy sets to characterize T2FSs.
Then, Wu~{\it et al.} \cite{W2019} presented a Jaccard similarity measure for general T2FSs, as
an extension of the Jaccard similarity measure for FSs and IVFSs.

\medskip

Being an extension of the logic connective conjunction and disjunctionin classical two-valued logic,
triangular norms ($t$-norms) with the neutral $1$ and triangular conorms ($t$-conorms) with the neutral $0$
on the unit interval $I=[0, 1]$ were introduced by Menger \cite{Me1942} in 1942 and by Schweizer and Sklar \cite{SS1961}
in 1961, respectively. Because $t$-norms and $t$-conorms have a close connection with fuzzy set theory
and order related theories, they play an important role in many fields, such as fuzzy set theory \cite{Zi2001}, fuzzy logic \cite{BJ2008},
fuzzy systems modeling \cite{Y2001}, and probabilistic metric spaces \cite{SS1961}. Walker and Walker \cite{WW2006}
extended $t$-norms and $t$-conorms to the algebra of truth values of T2FSs. Then, Hern\'{a}ndes {\it et al.}~\cite{HCT2015} introduced the
notions of $t_{r}$-norm and $t_{r}$-conorm by adding some ``restrictive axioms'' (see Definition~\ref{Def-1} below) with
systematic analysis. In particular, they \cite{HCT2015} proved that the following binary operation $\curlywedge$
(resp., $\curlyvee$) on the set of all normal and convex functions constructed by convolution is a $t_{r}$-norm (resp., a $t_{r}$-conorm).
Recently, we proved \cite{WC2019} that the fuzzy metric $M$ of every stationary fuzzy metric space $(X, M, \star)$ is uniformly continuous.

\medskip

Throughout this paper, let $I=[0, 1]$, $Map(X, Y)$ be the set of all mappings
from $X$ to $Y$, and `$\leq$' denote the usual order relation in the lattice of real numbers.
In particular, let $\mathbf{M}=Map(I, I)$ and $\mathbf{L}$ be the set of all normal and convex functions in $\mathbf{M}$.

\begin{definition}\cite{KMP2000}
A {\it $t$-norm} on $I$ is a binary operation $\star: I^{2}\rightarrow I$ satisfying
\begin{enumerate}
  \item[(T1)] ({\it commutativity}/{\it symmetry}) $x\star y=y\star x$ for $x, y\in I$;
  \item[(T2)] ({\it associativity}) $(x \star y) \star z=x\star (y\star z)$ for $x, y, z\in I$;
  \item[(T3)] ({\it increasing}) $\star$ is increasing in each argument;
  \item[(T4)] ({\it neutral element}) $1\star x=x\star 1=x$ for $x\in I$.
\end{enumerate}
A binary operation $\star: I^2\rightarrow I$ is a
{\it $t$-conorm} on $I$ if it satisfies axioms (T1), (T2), and (T3) above;
axiom (T4'): $0\star x=x\star 0=x$ for $x\in I$.
\end{definition}

For any subset $B$ of $X$, a special fuzzy set $\chi_{B}$, which is called the {\it characteristic function}
of $B$, is defined as
$$
\chi_{B}(x)=\begin{cases}
1, & x\in B,\\
0, & x\in X\setminus B.
\end{cases}
$$

\begin{definition}\cite{HCT2015}\label{Def-1}
A binary operation $T: \mathbf{L}^2 \rightarrow \mathbf{L}$ is a {\it $t_{r}$-norm}
($t$-norm according to the restrictive axioms), if
\begin{enumerate}[(O1)]
  \item $T$ is commutative, i.e., $T(f, g)=T(g, f)$ for $f, g\in \mathbf{L}$;
  \item $T$ is associative, i.e., $T(T(f, g), h)=T(f, T(g, h))$ for $f, g, h\in \mathbf{L}$;
  \item $T(f, \chi_{\{1\}})=f$ for $f\in \mathbf{L}$ (neutral element);
  \item letting $f, g, h\in \mathbf{L}$ such that $g\sqsubseteq h$;
  then, $T(f, g)\sqsubseteq T(f, h)$ (increasing in each argument);
  \item $T(\chi_{[0, 1]}, \chi_{[a, b]})=\chi_{[0, b]}$;
  \item $T$ is closed on $\mathbf{J}$;
  \item $T$ is closed on $\mathbf{K}$;
\end{enumerate}
where $\mathbf{J}$ is the set of all characteristic functions of the elements of $I$,
and $\mathbf{K}$ is the set of all characteristic functions of the
closed subintervals of $I$, i.e., $\mathbf{J}=\{\chi_{\{x\}}: x\in I\}$,
$\mathbf{K}=\{\chi_{[a, b]}: 0\leq a \leq b\leq 1\}$.

A binary operation $S: \mathbf{L}^2\rightarrow \mathbf{L}$ is a
{\it $t_r$-conorm} if it satisfies axioms (O1), (O2), (O4), (O6), and (O7) above;
axiom (O3'): $S(f, \chi_{\{0\}})=f$; and axiom (O5'):
$S(\chi_{[0, 1]}, \chi_{[a, b]})=\chi_{[a, 1]}$. Axioms (O1), (O2), (O3), (O3'),
and (O4) are called ``{\it basic axioms}'', and an operation that complies with
these axioms will be referred to as {\it $t$-norm} or {\it $t$-conorm}, respectively.
\end{definition}

Convolution as a standard way to combine functions was used to
construct operations on $Map(J, [0, 1])$. Let $\circ$ and $\blacktriangle$ be two binary operations
defined on $X$ and $Y$, respectively, and $\blacktriangledown$ be an appropriate operation on $Y$.
Define a binary operation $\bullet$ on the set $Map(X, Y)$ by
$$
(f\bullet g)(x)=\blacktriangledown \{f(y) \blacktriangle g(z): y\circ z=x\}.
$$
This method of defining an operation on $Map(X, Y)$ from operations on $X$ and $Y$ is called
{\it convolution}.

\begin{definition}\cite{HCT2015}
Let $\star$ be a binary operation on $I$, $\vartriangle$ be a
$t$-norm on $I$, and $\triangledown$ be a $t$-conorm on $I$. Define the binary operations
$\curlywedge$ and $\curlyvee: \mathbf{M}^2\rightarrow \mathbf{M}$ as follows: for $f, g\in \mathbf{M}$,
\begin{equation}\label{O-1}
(f\curlywedge g)(x)=\sup\left\{f(y)\star g(z): y\vartriangle z =x\right\},
\end{equation}
and
\begin{equation}\label{O-2}
(f\curlyvee g)(x)=\sup\left\{f(y)\star g(z): y\ \triangledown\ z =x\right\}.
\end{equation}
\end{definition}

In 2015, Hern\'{a}ndes {\it et al.}~\cite{HCT2015} proposed
the following open problem on the binary operations $\curlywedge$ and $\curlyvee$.

\begin{question}\label{Q-1}\cite{HCT2015}
{\it Apart from the $t$-norms, does there exist other binary operation
`$\star$' on $I$ such that `$\curlywedge$' and `$\curlyvee$' are, respectively,
a $t_r$-norm and a $t_r$-conorm on $\mathbf{L}$?}
\end{question}

This paper first gives a negative answer to Question~\ref{Q-1}, proving that, if a binary operation $\star$
ensures that $\curlywedge$ is a $t_{r}$-norm on $\mathbf{L}$ or $\curlyvee$ is a $t_{r}$-conorm on $\mathbf{L}$,
then $\star$ is a $t$-norm, i.e., $\star$ satisfies axioms (T1)--(T4). Then, it is proved that the following
are equivalent:
\begin{enumerate}[(1)]
  \item $\star$ is a $t$-norm on $I$;
  \item $\curlywedge$ is a $t_{r}$-norm on $\mathbf{L}$;
  \item $\curlywedge$ is a $t$-norm on $\mathbf{L}$;
  \item $\curlyvee$ is a $t_{r}$-conorm on $\mathbf{L}$;
  \item $\curlyvee$ is a $t$-conorm on $\mathbf{L}$.
\end{enumerate}
Finally, analogous results on $\vartriangle$ are presented when
the binary operation $\star$ is restricted to be a continuous $t$-norm.

\section{Preliminaries}
A {\it type-1 fuzzy set} $A$ in space $X$ is a mapping from $X$ to $I$, i.e.,
$A\in Map(X, I)$, and $A(x)$ is called the {\it degree of membership} of an element $x\in X$ to the set $A$.
The two sets $\O$ and $X$ are special elements in $Map(X, I)$, with $\O(x)\equiv 0$ and $X(x)\equiv 1$, respectively.
A fuzzy set $A\in Map(X, I)$ is {\it normal} if $\sup\{A(x): x\in I\}=1$.

\begin{definition}\cite{HCT2015}
A function $f\in \mathbf{M}$ is {\it convex} if, for any $x\leq y\leq z$, it holds that
$f(y)\geq f(x)\wedge f(z)$.
\end{definition}

\begin{definition}\cite{WW2005}
A {\it type-2 fuzzy set} $A$ in space $X$ is a mapping
$$
A: X\rightarrow \mathbf{M},
$$
i.e., $A\in Map(X, \mathbf{M})$. For any $x\in X$, $A(x)$ is also called the {\it degree of membership}
of an element $x\in X$ to the set $A$.
\end{definition}

\begin{definition}\cite{WW2005}
The operations of $\sqcup$ (union), $\sqcap$ (intersection), $\neg$ (complementation) on $\mathbf{M}$ are
defined as follows: for any $f, g\in \mathbf{M}$,
$$
(f\sqcup g)(x)=\sup\{f(y)\wedge g(z): y\vee z=x\},
$$
$$
(f\sqcap g)(x)=\sup\{f(y)\wedge g(z): y\wedge z=x\},
$$
and
$$
(\neg f)(x)=\sup\{f(y): 1-y=x\}=f(1-x).
$$
\end{definition}

From \cite{WW2005}, it follows that $\mathbb{M}=(\mathbf{M}, \sqcup, \sqcap, \neg, \chi_{\{0\}}, \chi_{\{1\}})$
does not have a lattice structure, although $\sqcup$ and $\sqcap$ satisfy the De Morgan's laws with respect
to the given operation $\neg$.

Walker and Walker \cite{WW2005} introduced the following partial order on $\mathbf{M}$.
\begin{definition}\cite{WW2005}
$f\sqsubseteq g$ if $f\sqcap g=f$; $f\preceq g$ if $f\sqcup g=g$.
\end{definition}

It follows from \cite[Proposition 14]{WW2005} that both $\sqsubseteq$ and $\preceq$ are partial
orders on $\mathbf{M}$. In \cite{HWW2010,HWW2008,WW2005}, it was proved that the subalgebra
$\mathbb{L}=(\mathbf{L}, \sqcup, \sqcap, \neg, \chi_{\{0\}}, \chi_{\{1\}})$
is a bounded complete lattice. In particular, $\chi_{\{0\}}$ and $\chi_{\{1\}}$
are the minimum and maximum, respectively.

For $f\in \mathbf{M}$, define $f^{L}$ and $f^{R}$ in $\mathbf{M}$ by
$$
f^{L}(x)=\sup\{f(y): y\leq x\},
$$
and
$$
f^{R}(x)=\sup\{f(y): y\geq x\}.
$$
Clearly, $f^L$ and $f^R$ are monotonically increasing and
decreasing, respectively. The following properties of $f^{L}$
and $f^{R}$ are obtained by Walker {\it et al.} (\cite{HWW2010,HWW2008,WW2005}).
\begin{proposition}\cite{WW2005}
For $f, g\in \mathbf{M}$,
\begin{enumerate}[(1)]
  \item $f\sqsubseteq g$ if and only if $f^{R}\wedge g \leq f\leq g^{R}$;
  \item $f\preceq g$ if and only if $f\wedge g^{L}\leq g\leq f^{L}$;
  \item $f\leq f^{L}$, $f\leq f^{R}$;
  \item $(f^{L})^{L}=f^{L}$, $(f^{R})^{R}=f^{R}$;
  \item $(f^{L})^{R}=(f^{R})^{L}=\sup f$;
\end{enumerate}
\end{proposition}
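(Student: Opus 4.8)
The plan is to treat the five items in two groups: items (3), (4), and (5) are immediate from the definitions of $f^{L}$ and $f^{R}$ together with the monotonicity recorded just after those definitions, whereas items (1) and (2) require first unwinding the operations $\sqcap$ and $\sqcup$ into closed pointwise formulas.

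For the easy group I would argue as follows. For (3), since $x\leq x$ and $x\geq x$, the value $f(x)$ is among the quantities whose supremum defines $f^{L}(x)$ and $f^{R}(x)$, so $f(x)\leq f^{L}(x)$ and $f(x)\leq f^{R}(x)$. For (4), I would write $(f^{L})^{L}(x)=\sup_{y\leq x}\sup_{z\leq y}f(z)$ and observe that $\{z:\ z\leq y\leq x\text{ for some }y\}$ is exactly $\{z:\ z\leq x\}$, so $(f^{L})^{L}=f^{L}$; the claim $(f^{R})^{R}=f^{R}$ is symmetric. For (5), I would use that $f^{L}$ is increasing and $f^{R}$ is decreasing: then $(f^{L})^{R}(x)=\sup_{y\geq x}f^{L}(y)=f^{L}(1)=\sup f$ and $(f^{R})^{L}(x)=\sup_{y\leq x}f^{R}(y)=f^{R}(0)=\sup f$, where $\sup f$ denotes the constant function equal to $\sup\{f(t):\ t\in I\}$.

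The core of the argument is items (1) and (2). For (1) the first step is to derive the explicit form of $f\sqcap g$. Writing the defining constraint $y\wedge z=x$ as ``$y=x$ and $z\geq x$'' or ``$z=x$ and $y\geq x$'' and interchanging the supremum with the minimum via $\sup_{t}\bigl(a\wedge h(t)\bigr)=a\wedge\sup_{t}h(t)$, I would obtain
\[
(f\sqcap g)(x)=\bigl(f(x)\wedge g^{R}(x)\bigr)\vee\bigl(f^{R}(x)\wedge g(x)\bigr).
\]
Given this, $f\sqsubseteq g$ (that is, $f\sqcap g=f$) is equivalent to requiring the right-hand side to equal $f(x)$ for every $x$. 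If $f\leq g^{R}$ and $f^{R}\wedge g\leq f$, then the first join-term is $f(x)$ and the second is at most $f(x)$, so the join is $f(x)$. Conversely, from the join equaling $f(x)$ one reads off $f^{R}\wedge g\leq f$ directly, and one gets $f\leq g^{R}$ by noting that both join-terms are at most $f(x)$, so one of them must equal $f(x)$, and in either case $f(x)\leq g^{R}(x)$ follows (using $g(x)\leq g^{R}(x)$ in the second case). Item (2) is handled identically after establishing the dual formula
\[
(f\sqcup g)(x)=\bigl(f(x)\wedge g^{L}(x)\bigr)\vee\bigl(f^{L}(x)\wedge g(x)\bigr),
\]
with $g^{R},f^{R}$ replaced throughout by $g^{L},f^{L}$.

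I expect the main obstacle to be precisely this first step of (1) and (2): justifying the passage from the convolution definitions to the closed forms, in particular the interchange of the supremum with the minimum and the correct bookkeeping of the two cases arising from $y\wedge z=x$ (respectively $y\vee z=x$). Once these formulas are in hand, both equivalences reduce to elementary manipulations in the lattice $[0,1]$, and the only delicate point is the converse implication, where $f\leq g^{R}$ (respectively $g\leq f^{L}$) must be extracted from an equality of joins rather than read off term by term.
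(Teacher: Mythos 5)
Your proof is correct. The paper itself gives no proof of this proposition --- it is quoted with the citation \cite{WW2005} --- and your route, first establishing the closed forms $(f\sqcap g)(x)=\bigl(f(x)\wedge g^{R}(x)\bigr)\vee\bigl(f^{R}(x)\wedge g(x)\bigr)$ and $(f\sqcup g)(x)=\bigl(f(x)\wedge g^{L}(x)\bigr)\vee\bigl(f^{L}(x)\wedge g(x)\bigr)$ via the case split on $y\wedge z=x$ (resp.\ $y\vee z=x$) and the interchange $\sup_{t}\bigl(a\wedge h(t)\bigr)=a\wedge\sup_{t}h(t)$ (valid in the complete chain $[0,1]$), and then reading off the order characterizations --- including the correct handling of the converse direction through the observation that a maximum of two terms bounded by $f(x)$ (resp.\ $g(x)$) must attain it in one of the two cases --- is essentially the standard argument of Walker and Walker in the cited source.
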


\begin{theorem}(\cite{HWW2010,HWW2008})\label{order-theorem}
Let $f, g\in \mathbf{L}$. Then, $f\sqsubseteq g$ if and only if
$g^L\leq f^L$ and $f^R\leq g^R$.
\end{theorem}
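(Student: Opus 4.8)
The plan is to build the equivalence on two ingredients: the order description $f\sqsubseteq g \iff f^{R}\wedge g\leq f\leq g^{R}$ supplied by part~(1) of the cited Proposition, and the pointwise identity $f=f^{L}\wedge f^{R}$, valid for every convex $f$. I would establish the latter first as a lemma. Since $f\leq f^{L}$ and $f\leq f^{R}$ (Proposition, part~(3)), one direction is immediate; conversely, if $f(x)<f^{L}(x)\wedge f^{R}(x)$ held at some $x$, then there would exist $y<x$ and $z>x$ with $f(y)>f(x)$ and $f(z)>f(x)$, so $f(y)\wedge f(z)>f(x)$, contradicting the convexity inequality $f(x)\geq f(y)\wedge f(z)$. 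With these two tools in hand, I would treat the two implications separately; a pleasant feature is that each uses only one of the two defining properties of $\mathbf{L}$ — sufficiency needs convexity of $f$, necessity needs normality of $f$.

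For the backward direction, I assume $g^{L}\leq f^{L}$ and $f^{R}\leq g^{R}$. Because $g\leq g^{L}$ always holds, the first hypothesis gives $g\leq f^{L}$, and hence $f^{R}\wedge g\leq f^{R}\wedge f^{L}=f$, the equality being the convexity identity for $f$. Likewise $f\leq f^{R}\leq g^{R}$. The order description then yields $f\sqsubseteq g$; this half is routine.

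For the forward direction, I assume $f\sqsubseteq g$, i.e. $f^{R}\wedge g\leq f\leq g^{R}$, and prove the two inequalities. The inequality $f^{R}\leq g^{R}$ is easy: applying the monotone operation $(\cdot)^{R}$ to $f\leq g^{R}$ and using $(g^{R})^{R}=g^{R}$ (Proposition, part~(4)) gives $f^{R}\leq (g^{R})^{R}=g^{R}$. The substantive part is $g^{L}\leq f^{L}$. Fixing $x$, I would reduce this to showing $g(y)\leq f^{L}(x)$ for every $y\leq x$, and then take the supremum over such $y$. From $f^{R}(y)\wedge g(y)\leq f(y)\leq f^{L}(x)$ I split into cases: if $f^{R}(y)\geq g(y)$, then $g(y)\leq f(y)\leq f^{L}(x)$ and we are done.

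The remaining case $f^{R}(y)<g(y)$ is the \emph{main obstacle}, since there the inequality $f^{R}(y)\wedge g(y)\leq f(y)$ degenerates to $f^{R}(y)\leq f(y)$ and says nothing about $g(y)$ directly. This is exactly where normality of $f$ is forced into the argument. Writing $1=\sup f=\max\{\sup_{w<y} f(w),\,f^{R}(y)\}$ and using $f^{R}(y)<g(y)\leq 1$, the value $1$ cannot come from $f^{R}(y)$, so $\sup_{w<y} f(w)=1$; since every such $w$ satisfies $w<y\leq x$, this gives $f^{L}(x)\geq 1\geq g(y)$. (The boundary instance $y=0$ lands in the easy case, because $f^{R}(0)=\sup f=1\geq g(0)$.) Taking the supremum over $y\leq x$ yields $g^{L}(x)\leq f^{L}(x)$, and combining this with $f^{R}\leq g^{R}$ completes the necessity and hence the equivalence.
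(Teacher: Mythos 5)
Your proof is correct. A preliminary remark: the paper itself offers no proof of this theorem --- it is imported verbatim from \cite{HWW2010,HWW2008} --- so there is no in-paper argument to compare against, and your proposal must be judged as a self-contained derivation from the quoted Walker--Walker proposition; on that standard it holds up at every step. The lemma $f=f^{L}\wedge f^{R}$ for convex $f$ is proved correctly (a strict inequality $f(x)<f^{L}(x)\wedge f^{R}(x)$ would yield $y<x<z$ with $f(y)\wedge f(z)>f(x)$, contradicting convexity), the backward direction is the clean computation $f^{R}\wedge g\leq f^{R}\wedge g^{L}\leq f^{R}\wedge f^{L}=f\leq f^{R}\leq g^{R}$ followed by part~(1) of the proposition, and $f^{R}\leq g^{R}$ in the forward direction follows correctly from monotonicity and idempotence of $(\cdot)^{R}$. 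You also correctly identify the only genuinely delicate point, the case $f^{R}(y)<g(y)$ where the hypothesis $f^{R}\wedge g\leq f$ says nothing about $g(y)$: there normality of $f$ forces $\sup_{w<y}f(w)=1$, hence $f^{L}(x)=1\geq g(y)$, and the boundary instance $y=0$ is properly dispatched via $f^{R}(0)=\sup f=1$. This $L$/$R$-operator route is in the same spirit as the machinery developed in the cited sources \cite{HWW2008,HWW2010}, which characterize the orders on $\mathbf{L}$ through exactly these monotone envelopes, so your argument is a faithful reconstruction rather than a detour. Your side observation --- that sufficiency invokes only convexity of $f$ while necessity invokes only normality of $f$, with no hypothesis on $g$ beyond $g\in\mathbf{M}$ --- is accurate and in fact slightly sharpens the statement as given.
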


\begin{lemma}\label{L-R}
For $f\in \mathbf{M}$, $f^{L}(0)=f(0)$ and $f^{R}(1)=f(1)$.
\end{lemma}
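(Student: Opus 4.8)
The plan is to unwind the definitions of $f^{L}$ and $f^{R}$ directly, exploiting the fact that every function in $\mathbf{M}=Map(I,I)$ has domain $I=[0,1]$, so that the index variable $y$ in each supremum is constrained to lie in $[0,1]$. First I would treat $f^{L}(0)$. By definition, $f^{L}(0)=\sup\{f(y):y\leq 0\}$, where $y$ ranges over the domain $I$. Since $y\in[0,1]$, the condition $y\leq 0$ forces $y=0$, so the indexing set $\{y\in I:y\leq 0\}$ is the singleton $\{0\}$. Hence $\{f(y):y\in I,\ y\leq 0\}=\{f(0)\}$ and therefore $f^{L}(0)=\sup\{f(0)\}=f(0)$.

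Next I would establish $f^{R}(1)=f(1)$ by the symmetric argument. Here $f^{R}(1)=\sup\{f(y):y\geq 1\}$, with $y$ again ranging over $I$. The constraint $y\geq 1$ combined with $y\in[0,1]$ forces $y=1$, so the relevant set collapses to $\{f(1)\}$, giving $f^{R}(1)=\sup\{f(1)\}=f(1)$.

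The only point that requires care, and it is a matter of being explicit rather than a genuine obstacle, is to record at the outset that the suprema defining $f^{L}$ and $f^{R}$ are taken over the domain $I$ of $f$ and not over all of $\mathbb{R}$; once this is made explicit, the boundary conditions $y\leq 0$ and $y\geq 1$ each pin $y$ down to a single endpoint of $I$, and both identities follow immediately. I therefore expect no real difficulty in the argument.
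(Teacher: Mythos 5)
Your proof is correct and is exactly the argument the paper intends: the paper dismisses the lemma as following ``from the definitions,'' and your unwinding --- that the constraints $y\leq 0$ and $y\geq 1$, combined with $y\in I=[0,1]$, each collapse the indexing set to a single endpoint --- is precisely that trivial verification, just made explicit.
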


\begin{proof}
From the definitions of $f^{L}$ and $f^{R}$, this holds trivially.
\end{proof}

\section{Answer to the Open Problem}

\subsection{Commutativity and Associativity of $\star$}\label{S3-1}

\begin{lemma}\label{1-Lemma}
Let $\star$ be a $t$-norm on $I$. Then, $x\star y=1$ if and only if $x=y=1$.
\end{lemma}

\begin{lemma}\label{Con-1}
Let $\vartriangle$ be a continuous $t$-norm on $I$ and $\star$ be a binary operation on $I$.
Then,
$$
(f\curlywedge g)(1)=f(1)\star g(1).
$$
\end{lemma}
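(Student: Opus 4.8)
The plan is to evaluate the defining supremum \eqref{O-1} at the single point $x=1$ and to show that its indexing set collapses to one pair. First I would examine the constraint $y\vartriangle z=1$. Since $\vartriangle$ is a $t$-norm on $I$, I may apply Lemma~\ref{1-Lemma} with $\star$ replaced by $\vartriangle$, which tells us that $y\vartriangle z=1$ holds if and only if $y=z=1$. Consequently the index set $\{(y,z)\in I^2: y\vartriangle z=1\}$ is exactly the singleton $\{(1,1)\}$.

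Given this identification, the supremum defining $(f\curlywedge g)(1)$ is taken over a one-element set, so it equals its unique member:
$$
(f\curlywedge g)(1)=\sup\left\{f(y)\star g(z): y\vartriangle z=1\right\}=f(1)\star g(1),
$$
which is precisely the asserted identity. No limiting argument, estimate, or monotonicity input is needed once the solution set of the constraint is pinned down.

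The only genuine step is therefore the determination of the solution set of $y\vartriangle z=1$, and this is handed to us directly by Lemma~\ref{1-Lemma}; I do not expect a real obstacle here. It is worth noting that the continuity of $\vartriangle$ plays no role in this particular computation, because Lemma~\ref{1-Lemma} holds for every $t$-norm. The continuity hypothesis is presumably carried along only for uniformity with the subsequent results, where the relevant constraint (for instance $y\vartriangle z=0$ in the companion value $(f\curlywedge g)(0)$) need not have a singleton solution set and continuity would genuinely enter.
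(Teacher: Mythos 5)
Your proof is correct and takes essentially the same route as the paper: the paper's proof likewise invokes Lemma~\ref{1-Lemma} (applied to the $t$-norm $\vartriangle$) to collapse the constraint set $\{(y,z)\in I^2: y\vartriangle z=1\}$ to the singleton $\{(1,1)\}$, so the supremum in \eqref{O-1} at $x=1$ reduces to $f(1)\star g(1)$. Your observation that the continuity of $\vartriangle$ is not used at this step is also accurate.
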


\begin{proof}
Since $\vartriangle$ is a $t$-norm, from Lemma~\ref{1-Lemma}, it follows that
$$
(f\curlywedge g)(1)=\sup\{f(y)\star g(z): y\vartriangle z=1\}=f(1)\star g(1).
$$
\end{proof}

\begin{proposition}\label{Commu-Asso-Thm}
Let $\vartriangle$ be a continuous $t$-norm on $I$ and $\star$ be a binary operation on $I$.
Then, \begin{enumerate}[(1)]
  \item $\curlywedge$ and $\curlyvee$ are commutative on $\mathbf{L}$ if and only if $\star$ is commutative;
  \item If $\curlywedge$ and $\curlyvee$ are associative on $\mathbf{L}$, then $\star$ is
associative.
\end{enumerate}
\end{proposition}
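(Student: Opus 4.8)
The plan is to reduce everything to the single point $x=1$, where Lemma~\ref{Con-1} converts $\curlywedge$ into $\star$ via $(f\curlywedge g)(1)=f(1)\star g(1)$. To that end, first I would record a family of test functions in $\mathbf{L}$ with prescribed value at $1$: for $a\in I$, define $f_a\in\mathbf{M}$ by $f_a(y)=1$ for $y\in[0,1)$ and $f_a(1)=a$. This $f_a$ is nonincreasing, hence convex (for $x\le y\le z$ one has $f_a(y)\ge f_a(z)\ge f_a(x)\wedge f_a(z)$), and it is normal since $\sup f_a=1$; thus $f_a\in\mathbf{L}$ with $f_a(1)=a$. These functions let me realize an arbitrary value $a\star b$ as $(f_a\curlywedge f_b)(1)$.

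For (1), the forward implication is immediate: if $\star$ is commutative then, using also the commutativity of the $t$-norm $\vartriangle$ and of the $t$-conorm $\triangledown$, in each supremum defining $(f\curlywedge g)(x)$ and $(f\curlyvee g)(x)$ I may swap $y\leftrightarrow z$ and $f\leftrightarrow g$ simultaneously, giving $f\curlywedge g=g\curlywedge f$ and $f\curlyvee g=g\curlyvee f$. For the converse I only need commutativity of $\curlywedge$: given $a,b\in I$, Lemma~\ref{Con-1} yields $a\star b=f_a(1)\star f_b(1)=(f_a\curlywedge f_b)(1)=(f_b\curlywedge f_a)(1)=f_b(1)\star f_a(1)=b\star a$.

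For (2) I again work only with $\curlywedge$. Fix $a,b,c\in I$ and apply Lemma~\ref{Con-1} twice on each side of the associativity identity, evaluated at $x=1$. Writing $F=f_a\curlywedge f_b$ and $G=f_b\curlywedge f_c$ (both in $\mathbf{M}$, so that Lemma~\ref{Con-1} still applies), I get $((f_a\curlywedge f_b)\curlywedge f_c)(1)=F(1)\star f_c(1)=(a\star b)\star c$ and $(f_a\curlywedge(f_b\curlywedge f_c))(1)=f_a(1)\star G(1)=a\star(b\star c)$. Associativity of $\curlywedge$ on $\mathbf{L}$ gives $(f_a\curlywedge f_b)\curlywedge f_c=f_a\curlywedge(f_b\curlywedge f_c)$, so evaluating at $1$ forces $(a\star b)\star c=a\star(b\star c)$.

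The only real subtlety — and the step I would double-check most carefully — is the applicability of Lemma~\ref{Con-1}: it rests on the fact that for the $t$-norm $\vartriangle$ one has $y\vartriangle z=1$ only when $y=z=1$ (Lemma~\ref{1-Lemma}), so that the supremum defining $(\,\cdot\curlywedge\cdot\,)(1)$ collapses to the single term $f(1)\star g(1)$. Note that the argument needs this collapse even when the first factor is a possibly non-normal output $f_a\curlywedge f_b\in\mathbf{M}$; since Lemma~\ref{Con-1} is stated for arbitrary $f,g\in\mathbf{M}$, this causes no difficulty. The converse directions use only $\curlywedge$; one could run the symmetric argument with $\curlyvee$ at the point $0$ (using $y\triangledown z=0\iff y=z=0$), but this is unnecessary for the stated implications.
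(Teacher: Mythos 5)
Your proof is correct and follows essentially the same route as the paper: reduce both properties to evaluation at $x=1$ via Lemma~\ref{Con-1}, using a family of normal convex test functions with prescribed value at $1$ (the paper uses the decreasing linear functions $\mathscr{V}_a(t)=(a-1)t+1$ where you use step functions, an immaterial difference). Your explicit check that Lemma~\ref{Con-1} applies to the intermediate outputs $f_a\curlywedge f_b\in\mathbf{M}$ in the associativity step is a point the paper uses implicitly.
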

\begin{proof}
(1) The sufficiency follows from the proof of \cite[Proposition~1]{HCT2015}. It remains to prove the necessity.
Suppose on the contrary that $\star$ is not commutative. Then, there exist $u, v\in I$ such that $u\star v\neq v\star u$.
Choose two functions $f, g\in \mathbf{M}$ as
$$
f(x)=(u-1)x+1,
$$
and
$$
g(x)=(v-1)x+1,
$$ for $x\in I$. It can be verified that
$f, g\in \mathbf{L}$, as $f$ and $g$ are decreasing.
Since $\curlywedge$ is commutative, applying Lemma~\ref{Con-1} yields that
\begin{align*}
&u\star v=f(1) \star g(1)=(f\curlywedge g)(1)\\
&=(g\curlywedge f)(1)=g(1) \star f(1)=v\star u,
\end{align*}
which is a contradiction. Therefore,
$\star$ is commutative.

\medskip

(2) Suppose on the contrary that $\star$ is not associative. Then, there exist
$u, v, w\in I$ such that $u\star (v\star w)\neq (u\star v) \star w$. Choose three
functions $f, g, h\in \mathbf{M}$ as
$$
f(x)=(u-1)x+1,
$$
$$
g(x)=(v-1)x+1,
$$
and
$$
h(x)=(w-1)x+1,
$$ for $x\in I$.
It can be verified that $u, v, w\in \mathbf{L}$, as $f$, $g$, and $h$ are decreasing. Since
$\curlywedge$ is associative, applying Lemma~\ref{Con-1} yields that
\begin{align*}
&u\star (v\star w)=f(1) \star (g\curlywedge h)(1)=(f\curlywedge (g\curlywedge h))(1)\\
&=
((f\curlywedge g) \curlywedge h)(1)=(f\curlywedge g)(1) \star h(1)=(u\star v)\star w,
\end{align*}
which is a contradiction. Therefore,
$\star$ is associative.
\end{proof}

\begin{figure}[H]
\begin{center}
\scalebox{0.6}{\includegraphics{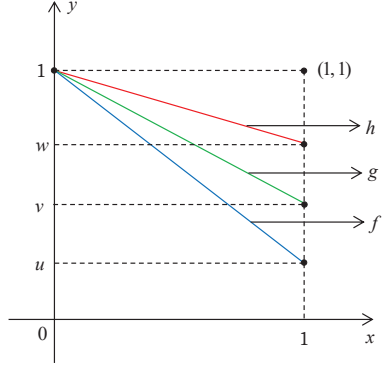}}
\renewcommand{\figure}{Fig.}
\caption{An illustration diagram of the functions $f, g, h$.
}
\end{center}
\end{figure}

\begin{remark}
Similar results to Proposition~\ref{Commu-Asso-Thm} are obtained by Hern\'{a}ndez {\it et al.}
\cite{HCT2015} under the assumption that $\curlywedge$ and $\curlyvee$ are commutative or associative
on $\mathbf{M}$, which is stronger than the condition in Proposition~\ref{Commu-Asso-Thm}.
\end{remark}

\subsection{Neutral Element $1$ for $\star$}

For any fixed $x\in I$, define $\mathscr{W}_{x}: I\rightarrow I$ by
$$
\mathscr{W}_{x}(t)=\begin{cases}
0, & t\in [0, x),\\
t, & t\in [x, 1],
\end{cases}
$$
for $t\in I$. It can be verified that $\mathscr{W}_{x}\in \mathbf{L}$, as $\mathscr{W}_{x}$ is increasing
for $x\in I$.

\begin{figure}[H]
\begin{center}
\scalebox{0.6}{\includegraphics{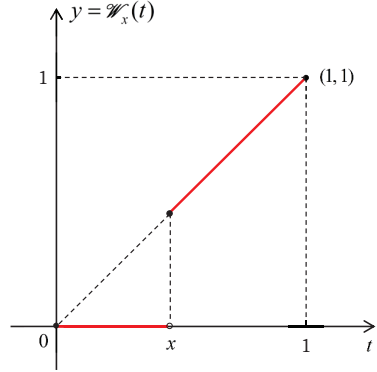}}
\renewcommand{\figure}{Fig.}
\caption{An illustration diagram of the function $\mathscr{W}_{x}(t)$.
}
\end{center}
\end{figure}

\begin{lemma}\label{0-element}
Let $\vartriangle$ be a continuous $t$-norm on $I$ and $\star$ be a binary operation on $I$.
If $\curlywedge$ is a $t$-norm on $\mathbf{L}$, then
$0\star x=x\star 0=0$ for all $x\in I$.
\end{lemma}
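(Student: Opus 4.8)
The plan is to exploit the neutral-element axiom (O3), namely $f\curlywedge\chi_{\{1\}}=f$ for every $f\in\mathbf{L}$, together with the fact that $0$ annihilates the $t$-norm $\vartriangle$. First I would reduce the two-sided claim to a one-sided one: since $\curlywedge$ is a $t$-norm it is in particular commutative, and the necessity argument of Proposition~\ref{Commu-Asso-Thm}(1) (which uses only the commutativity of $\curlywedge$) forces $\star$ to be commutative. Hence $0\star x=x\star 0$, and it suffices to prove $x\star 0=0$ for every $x\in I$.

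The core step is a single evaluation of (O3) at the point $0$. Fix $x\in(0,1]$ and apply (O3) to the function $\mathscr{W}_x\in\mathbf{L}$, reading off the value at $0$:
$$
\sup\{\mathscr{W}_x(y)\star\chi_{\{1\}}(z): y\vartriangle z=0\}=(\mathscr{W}_x\curlywedge\chi_{\{1\}})(0)=\mathscr{W}_x(0)=0.
$$
The point is that the pair $(y,z)=(x,0)$ is admissible in this supremum, because $x\vartriangle 0=0$ (recall $0$ is the annihilator of any $t$-norm), and it contributes the term $\mathscr{W}_x(x)\star\chi_{\{1\}}(0)=x\star 0$, using $\mathscr{W}_x(x)=x$ and $\chi_{\{1\}}(0)=0$. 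Since every term in the supremum lies in $[0,1]$ and the supremum is pinned to $0$, each individual term---in particular $x\star 0$---must vanish, giving $x\star 0=0$.

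It remains to treat $x=0$, which I would handle in exactly the same way, e.g.\ by applying (O3) to $\mathscr{W}_a$ for any $a\in(0,1]$ and using the admissible pair $(0,0)$ to force $0\star 0=0$. Combining the two cases yields $x\star 0=0$ for all $x\in I$, and commutativity of $\star$ then gives $0\star x=0$ as well.

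The only genuine subtlety---and the step I expect to be the crux---is recognizing the correct pair to feed into the convolution supremum. Everything hinges on the annihilator identity $x\vartriangle 0=0$, which guarantees that $(x,0)$ is admissible and hence that $x\star 0$ appears verbatim as one of the terms; the shape of $\mathscr{W}_x$ (namely $\mathscr{W}_x(0)=0$ while $\mathscr{W}_x(x)=x$) is precisely what lets one evaluation isolate $x\star 0$ while keeping the supremum equal to $0$. Note that only this annihilator property of $\vartriangle$ is used, so the continuity hypothesis plays no essential role in this particular lemma.
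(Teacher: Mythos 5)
Your proof is correct, and it takes a genuinely different route from the paper's. The paper also exploits the neutrality axiom applied to $\mathscr{W}_x$, but it evaluates $(\mathscr{W}_x\curlywedge\chi_{\{1\}})(t)$ at interior points $t\in(0,x)$ and then needs the intermediate value theorem on the continuous section $\vartriangle(x,\cdot)$ (with $\vartriangle(x,0)=0$ and $\vartriangle(x,1)=x$) to produce a witness $z_1\in(0,1)$ with $x\vartriangle z_1=t$, so that the term $\mathscr{W}_x(x)\star\chi_{\{1\}}(z_1)=x\star 0$ sits inside a supremum pinned to $0$; moreover it handles $x=1$ and $x=0$ by separate evaluations (via Lemma~\ref{Con-1} at the point $1$, and via $(\chi_{\{0\}}\curlywedge\chi_{\{1\}})(\frac{1}{4})$ with a second intermediate-value argument). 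You instead evaluate once at the point $0$ and use the annihilator identity $x\vartriangle 0=0$, which holds for \emph{every} $t$-norm by monotonicity and neutrality ($x\vartriangle 0\leq 1\vartriangle 0=0$), to exhibit the admissible pair $(x,0)$; this treats all $x\in(0,1]$ uniformly, including $x=1$, and the pair $(0,0)$ disposes of $0\star 0$. Both arguments conclude identically by extracting commutativity of $\star$ from Proposition~\ref{Commu-Asso-Thm}(1). What your version buys is exactly the remark you close with: the continuity hypothesis on $\vartriangle$ is superfluous for this particular lemma, and the paper's three-case analysis collapses to a single evaluation of the convolution at $0$.
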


\begin{proof}
(1)
As $\chi_{\{1\}}$ is a neural element, by Lemma \ref{Con-1}, one has
\begin{align*}
0&=\chi_{\{0\}}(1)=(\chi_{\{1\}}\curlywedge \chi_{\{0\}})(1)\\
&=\chi_{\{1\}}(1) \star \chi_{\{0\}}(1)=1\star 0.
\end{align*}

(2) Fix any $x\in (0, 1)$. From
$\mathscr{W}_{x}(t)=(\mathscr{W}_{x}\curlywedge \chi_{\{1\}})(t)=\sup\{\mathscr{W}_{x}(y) \star \chi_{\{1\}}(z):
y \vartriangle z=t\}$, it follows that, for any $t\in (0, x)$,
\begin{equation}\label{e-1}
0=\mathscr{W}_{x}(t)=\sup\{\mathscr{W}_{x}(y) \star \chi_{\{1\}}(z):
y \vartriangle z=t\}.
\end{equation}
Since $\vartriangle$$(x, -)$ is continuous on $[0, 1]$, and $\vartriangle$$(x, 0)=0$, $\vartriangle$$(x, 1)=x$,
it follows from the intermediate value theorem that there exists some
$z_1\in (0, 1)$ such that $\vartriangle$$(x, z_1)=x$$\vartriangle$$z_1=t$. This, together with \eqref{e-1}, implies that
$$
0\geq \mathscr{W}_{x}(x)\star \chi_{\{1\}}(z_1)=x\star 0, \text{ i.e., } x\star 0=0.
$$

(3) Note that $0=\chi_{\{0\}}(\frac{1}{4})=(\chi_{\{0\}}\curlywedge \chi_{\{1\}})(\frac{1}{4})
=\sup\left\{\chi_{\{0\}}(y)\star \chi_{\{1\}}(z): y\vartriangle z=\frac{1}{4}\right\}$.
Similarly to the proof of (2), it follows that there exists
$y\in (0, 1)$ such that $y\vartriangle\frac{1}{2}=\frac{1}{4}$. This implies that
$$
0\geq \chi_{\{0\}}(y) \star \chi_{\{1\}}(\frac{1}{2})=0 \star 0, \text{ i.e., } 0=0\star 0.
$$

Summing up (1)--(3) and the commutativity of $\star$ (Proposition \ref{Commu-Asso-Thm}),
it follows that, for any $x\in [0, 1]$,
$$
x\star 0=0\star x=0.
$$
\end{proof}

\begin{lemma}\label{1-elemrnt}
Let $\vartriangle$ be a continuous $t$-norm on $I$ and $\star$ be a binary operation on $I$.
If $\curlywedge$ is a $t$-norm on $\mathbf{L}$, then
$1\star x=x\star 1=x$ for all $x\in I$.
\end{lemma}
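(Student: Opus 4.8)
The plan is to isolate the value $x\star 1$ by evaluating the neutral-element identity for $\curlywedge$ at the single point $1$, where Lemma~\ref{Con-1} converts the convolution into an ordinary $\star$-product. First I would reduce the two-sided statement to a one-sided one: since $\curlywedge$ is a $t$-norm on $\mathbf{L}$ it is in particular commutative (axiom (O1)), and the necessity direction of Proposition~\ref{Commu-Asso-Thm}(1)—whose proof uses only the commutativity of $\curlywedge$—then yields that $\star$ is commutative. Hence $1\star x=x\star 1$, and it suffices to prove $x\star 1=x$ for every $x\in I$.

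Next, fix $x\in I$ and take the test function $f(t)=(x-1)t+1$ for $t\in I$, exactly as in the proof of Proposition~\ref{Commu-Asso-Thm}. This $f$ is decreasing, hence convex, and $f(0)=1$, so $f$ is normal; thus $f\in\mathbf{L}$, and $f(1)=x$. Because $\chi_{\{1\}}$ is the neutral element of $\curlywedge$ (axiom (O3)), we have $f\curlywedge\chi_{\{1\}}=f$. Evaluating both sides at $1$ and applying Lemma~\ref{Con-1} gives
$$
x=f(1)=(f\curlywedge\chi_{\{1\}})(1)=f(1)\star\chi_{\{1\}}(1)=x\star 1,
$$
which, combined with the commutativity of $\star$, delivers $1\star x=x\star 1=x$ for all $x\in I$.

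The argument above needs only that $\vartriangle$ is a continuous $t$-norm, so that Lemma~\ref{Con-1} applies, and that the linear $f$ lies in $\mathbf{L}$; both points are routine, so there is no serious obstacle in this route. An alternative, more hands-on approach is to compute $(f\curlywedge\chi_{\{1\}})(t)$ for every $t$ directly from \eqref{O-1}: the pair $(y,z)=(t,1)$ satisfies $y\vartriangle z=t$ and contributes $f(t)\star 1$, while every pair with $z\neq 1$ contributes $f(y)\star\chi_{\{1\}}(z)=f(y)\star 0$. Here the main obstacle is showing that these off-diagonal terms cannot inflate the supremum, which is precisely the content of Lemma~\ref{0-element} ($y\star 0=0$). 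With that in hand the supremum collapses to $f(t)\star 1$, so $f=f\curlywedge\chi_{\{1\}}$ forces $f(t)=f(t)\star 1$ for all $t$, and the same choice of $f$ (or any family of functions in $\mathbf{L}$ whose values at a point range over $I$) again yields $x\star 1=x$.
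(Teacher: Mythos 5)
Your proposal is correct, and its primary route is genuinely different from --- and leaner than --- the paper's. The paper fixes $x\in(0,1)$ and evaluates the neutral-element identity at the \emph{interior} point $x$ using the step-type function $\mathscr{W}_{x}$; this forces a case analysis over all pairs $(y,z)$ with $y\vartriangle z=x$, and the off-diagonal terms (those with $z<1$) are suppressed only by first establishing Lemma~\ref{0-element} ($u\star 0=0$), which itself rests on the intermediate value theorem and hence on the continuity of $\vartriangle$. You instead evaluate at the endpoint $1$ with the decreasing linear function $\mathscr{V}_{x}$ (a family the paper introduces only later, for the monotonicity lemma): since $y\vartriangle z=1$ forces $y=z=1$ (Lemma~\ref{1-Lemma}), Lemma~\ref{Con-1} collapses the supremum to the single term $f(1)\star\chi_{\{1\}}(1)=x\star 1$, with no case split, no appeal to Lemma~\ref{0-element}, and a uniform treatment of all $x\in[0,1]$ including the endpoints, which the paper handles separately (using $1\star 1=1$ and $0\star 1=0$ from Lemma~\ref{0-element}). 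Your checks that $f\in\mathbf{L}$ (decreasing, hence convex per the paper's definition, and normal since $f(0)=1$) are sound, and your appeal to the necessity half of Proposition~\ref{Commu-Asso-Thm}(1) for commutativity of $\star$ is legitimate, since that direction uses only axiom (O1) for $\curlywedge$ together with Lemma~\ref{Con-1} --- the paper's own proof closes the same way. Your alternative ``hands-on'' sketch, computing $(f\curlywedge\chi_{\{1\}})(t)$ pointwise and invoking Lemma~\ref{0-element} to kill the $z\neq 1$ contributions, is essentially the paper's argument, and you correctly identify Lemma~\ref{0-element} as the one nontrivial obstacle on that route. Net comparison: your main route is shorter and lighter on prerequisites (it does not need Lemma~\ref{0-element} at all), while the paper's route costs more but uses only machinery already developed at that point in its exposition.
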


\begin{proof}
(1) Since $\chi_{\{1\}}$ is a neural element, from Lemma \ref{Con-1}, it follows that
\begin{align*}
1&=\chi_{\{1\}}(1)=(\chi_{\{1\}}\curlywedge \chi_{\{1\}})(1)\\
&=\chi_{\{1\}}(1)\star \chi_{\{1\}}(1) =1\star 1.
\end{align*}

(2) For any fixed $x\in (0, 1)$, $x=\mathscr{W}_{x}(x)=(\mathscr{W}_{x}\curlywedge \chi_{\{1\}})(x)
=\sup\left\{\mathscr{W}_{x}(y)\star \chi_{\{1\}}(z): y\vartriangle z=x\right\}$. For $y, z\in I$ with $y\vartriangle z=x$,
consider the following two cases:

Case 1. If $z=1$, then $y=x$. This implies that $\mathscr{W}_{x}(y)\star \chi_{\{1\}}(z)=x\star 1$;

Case 2. If $z<1$, then $\chi_{\{1\}}(z)=0$. Applying Lemma~\ref{0-element} gives that
$$
\mathscr{W}_{x}(y)\star \chi_{\{1\}}(z)=0.
$$
Thus,
$$
x=\sup\left\{\mathscr{W}_{x}(y)\star \chi_{\{1\}}(z): y\vartriangle z=x\right\}=x\star 1.
$$

The proof is completed by applying $0\star 1=0$ and the commutativity of $\star$.
\end{proof}

\subsection{Increasing in Each Argument for $\star$}
For any fixed $x\in I$, define $\mathscr{V}_{x}: I\rightarrow I$ by
$$
\mathscr{V}_{x}(t)=(x-1)t+1, \quad \forall t\in I.
$$
It can be verified that $\mathscr{V}_{x}\in \mathbf{L}$, as $\mathscr{V}_{x}$
is decreasing for $x\in I$. Clearly, functions $f$, $g$, and $h$ constructed in Proposition~\ref{Commu-Asso-Thm}
satisfy that $f=\mathscr{V}_{u}$, $g=\mathscr{V}_{v}$, and $h=\mathscr{V}_{w}$.

\begin{figure}[H]
\begin{center}
\scalebox{0.6}{\includegraphics{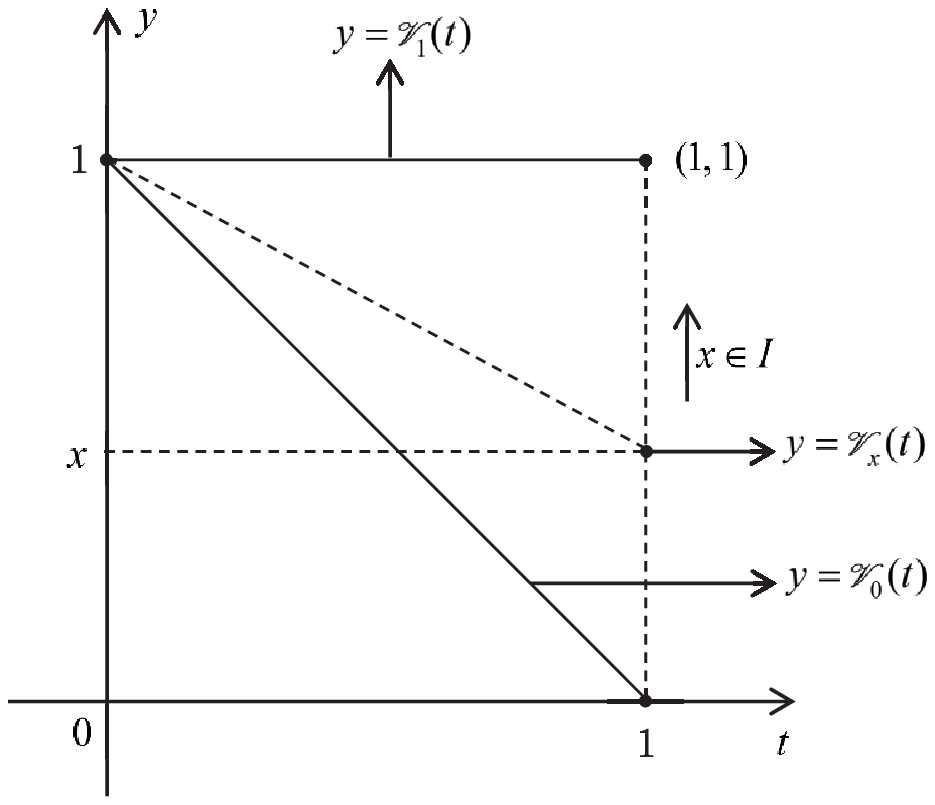}}
\renewcommand{\figure}{Fig.}
\caption{An illustration diagram of the function $\mathscr{V}_{x}(t)$.
}
\end{center}
\end{figure}

Applying the decreasing property of $\mathscr{V}_{x}$ immediately yields the following result.

\begin{lemma}\label{L-R-Lemma}
For any $x\in I$, $\mathscr{V}^{L}_{x}\equiv 1$ and $\mathscr{V}^{R}_{x}=\mathscr{V}_{x}$.
\end{lemma}

\begin{lemma}\label{Lemma-Q}
For any $x_1, x_2\in I$ with $x_1\leq x_2$,
$\mathscr{V}_{x_1}\sqsubseteq \mathscr{V}_{x_2}$.
\end{lemma}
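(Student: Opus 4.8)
The plan is to reduce the relation $\sqsubseteq$ to the ordinary pointwise comparison of the functions $\mathscr{V}_{x_1}$ and $\mathscr{V}_{x_2}$ via Theorem~\ref{order-theorem}, which characterizes $\sqsubseteq$ on $\mathbf{L}$ entirely through the monotone envelopes $f^L$ and $f^R$. Since both $\mathscr{V}_{x_1}$ and $\mathscr{V}_{x_2}$ lie in $\mathbf{L}$ (being decreasing, hence convex and normal, as already noted when $\mathscr{V}_x$ was introduced), Theorem~\ref{order-theorem} applies and tells us that $\mathscr{V}_{x_1}\sqsubseteq\mathscr{V}_{x_2}$ holds if and only if $\mathscr{V}^{L}_{x_2}\leq\mathscr{V}^{L}_{x_1}$ and $\mathscr{V}^{R}_{x_1}\leq\mathscr{V}^{R}_{x_2}$.

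Next I would substitute the explicit envelopes supplied by Lemma~\ref{L-R-Lemma}. For every $x\in I$ that lemma gives $\mathscr{V}^{L}_{x}\equiv 1$ and $\mathscr{V}^{R}_{x}=\mathscr{V}_{x}$. Hence the first required inequality $\mathscr{V}^{L}_{x_2}\leq\mathscr{V}^{L}_{x_1}$ becomes $1\leq 1$ and holds trivially (with equality), while the second inequality $\mathscr{V}^{R}_{x_1}\leq\mathscr{V}^{R}_{x_2}$ reduces to the pointwise inequality $\mathscr{V}_{x_1}\leq\mathscr{V}_{x_2}$.

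Finally I would verify this pointwise inequality directly from the linear form of $\mathscr{V}_{x}$. For any $t\in I$,
\[
\mathscr{V}_{x_2}(t)-\mathscr{V}_{x_1}(t)=\bigl[(x_2-1)t+1\bigr]-\bigl[(x_1-1)t+1\bigr]=(x_2-x_1)\,t\geq 0,
\]
since $x_2\geq x_1$ and $t\geq 0$. Thus $\mathscr{V}_{x_1}\leq\mathscr{V}_{x_2}$, both conditions of Theorem~\ref{order-theorem} are met, and $\mathscr{V}_{x_1}\sqsubseteq\mathscr{V}_{x_2}$ follows. I do not expect a genuine obstacle here: the only conceptual point is to recognize that, for these particular decreasing functions, the abstract order $\sqsubseteq$ collapses to ordinary pointwise order because their left envelopes are identically $1$, so that the condition on the right envelopes alone governs the comparison; the remainder is an elementary computation.
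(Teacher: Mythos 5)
Your proposal is correct and follows essentially the same route as the paper's own proof: both invoke Lemma~\ref{L-R-Lemma} to compute the envelopes $\mathscr{V}^{L}_{x}\equiv 1$ and $\mathscr{V}^{R}_{x}=\mathscr{V}_{x}$, and then apply Theorem~\ref{order-theorem} to reduce $\sqsubseteq$ to the pointwise inequality $\mathscr{V}_{x_1}\leq\mathscr{V}_{x_2}$, which the paper dismisses as ``clear'' and you verify by the elementary computation $(x_2-x_1)t\geq 0$.
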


\begin{proof}
Clearly, $\mathscr{V}_{x_1}\leq \mathscr{V}_{x_2}$. Applying Lemma~\ref{L-R-Lemma} yields that
$$
\mathscr{V}^{L}_{x_2}\leq \mathscr{V}^{L}_{x_1},
$$
and
$$
\mathscr{V}^{R}_{x_1}\leq \mathscr{V}^{R}_{x_2}.
$$
This, together with Theorem~\ref{order-theorem}, implies that
$$
\mathscr{V}_{x_1}\sqsubseteq \mathscr{V}_{x_2}.
$$
\end{proof}

\begin{figure}[H]
\begin{center}
\scalebox{0.6}{\includegraphics{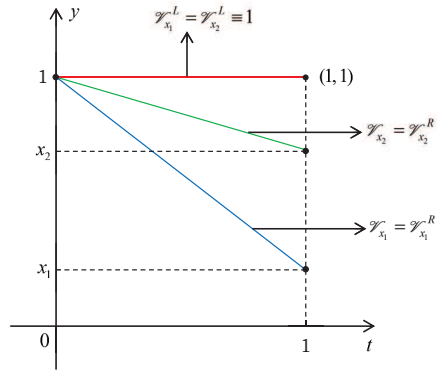}}
\renewcommand{\figure}{Fig.}
\caption{An illustration diagram of the function $\mathscr{V}_{x_1}(t)$ and $\mathscr{V}_{x_2}(t)$.
}
\end{center}
\end{figure}

\begin{lemma}\label{Increasing-Lemma}
Let $\vartriangle$ be a continuous $t$-norm on $I$ and $\star$ be a binary operation on $I$.
If $\curlywedge$ is a $t$-norm on $\mathbf{L}$, then for any $y\in (0, 1)$
the functions $\star^{r}_{y}$ and $\star^{l}_{y}$ are increasing, where
$\star^{r}_{y}(x)=x\star y$ and $\star^{l}_{y}(x)=y\star x$ for $x\in I$.
\end{lemma}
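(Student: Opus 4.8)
The plan is to transport the order–monotonicity of $\curlywedge$ (axiom (O4)/(T3)) down to the binary operation $\star$ by evaluating everything at the point $1$, where Lemma~\ref{Con-1} lets me read off a value of $\star$ directly. Concretely, I would fix $y\in(0,1)$, take arbitrary $x_1\le x_2$ in $I$, and aim for the two inequalities $y\star x_1\le y\star x_2$ and $x_1\star y\le x_2\star y$, which are exactly the assertions that $\star^{l}_{y}$ and $\star^{r}_{y}$ are increasing.

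First I would invoke Lemma~\ref{Lemma-Q} to obtain $\mathscr{V}_{x_1}\sqsubseteq\mathscr{V}_{x_2}$. Since $\curlywedge$ is a $t$-norm on $\mathbf{L}$, it is increasing in each argument with respect to $\sqsubseteq$, so applying this with the fixed first argument $\mathscr{V}_y$ gives $\mathscr{V}_y\curlywedge\mathscr{V}_{x_1}\sqsubseteq\mathscr{V}_y\curlywedge\mathscr{V}_{x_2}$, where both sides lie in $\mathbf{L}$ because $\curlywedge$ is closed on $\mathbf{L}$. The next step is to convert this $\sqsubseteq$-relation into an ordinary numerical inequality at $1$: by Theorem~\ref{order-theorem} the relation $f\sqsubseteq g$ forces $f^{R}\le g^{R}$, and evaluating at $1$ together with Lemma~\ref{L-R} (which gives $h^{R}(1)=h(1)$) yields $(\mathscr{V}_y\curlywedge\mathscr{V}_{x_1})(1)\le(\mathscr{V}_y\curlywedge\mathscr{V}_{x_2})(1)$. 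Finally, Lemma~\ref{Con-1} rewrites each side as $\mathscr{V}_y(1)\star\mathscr{V}_{x_i}(1)=y\star x_i$, producing $y\star x_1\le y\star x_2$, i.e. $\star^{l}_{y}$ is increasing. The companion statement for $\star^{r}_{y}$ then follows at once from the commutativity of $\star$ established in Proposition~\ref{Commu-Asso-Thm}.

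The step I expect to require the most care is the bridge between the abstract partial order $\sqsubseteq$ appearing in the monotonicity axiom and the pointwise order in which $\star$ is expressed: $\sqsubseteq$ is \emph{not} the pointwise order on $\mathbf{M}$, so one cannot simply read off values, and the whole argument hinges on the observation that $\sqsubseteq$ nonetheless controls the value at the endpoint $1$ through the $f^{R}$ machinery of Theorem~\ref{order-theorem} and Lemma~\ref{L-R}. Everything else is bookkeeping: the linear test functions $\mathscr{V}_x$ are tailored so that $\mathscr{V}_x(1)=x$, which makes evaluation at $1$ recover exactly the argument of $\star$, and Lemma~\ref{Con-1} guarantees that the supremum defining $\curlywedge$ collapses to the single product $\mathscr{V}_y(1)\star\mathscr{V}_{x}(1)$ at $1$.
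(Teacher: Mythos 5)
Your proposal is correct and follows essentially the same route as the paper's proof: test functions $\mathscr{V}_x$, Lemma~\ref{Lemma-Q} for $\mathscr{V}_{x_1}\sqsubseteq\mathscr{V}_{x_2}$, monotonicity of $\curlywedge$, Theorem~\ref{order-theorem} to extract $f^R\leq g^R$, evaluation at $1$ via Lemma~\ref{L-R}, and collapse of the supremum by Lemma~\ref{Con-1}. The only cosmetic difference is that you fix $\mathscr{V}_y$ as the first argument and obtain $\star^{l}_{y}$ first, whereas the paper varies the first argument to get $\star^{r}_{y}$ first; both finish with commutativity from Proposition~\ref{Commu-Asso-Thm}.
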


\begin{proof}
It follows from Proposition~\ref{Commu-Asso-Thm} that $\star^{r}_{y}=\star^{l}_{y}$. So,
it suffices to prove that $\star^{r}_{y}$ is increasing.

For any  $0\leq x_1\leq x_2\leq 1$, since $\curlywedge$ is increasing in each argument,
from Lemma~\ref{Lemma-Q}, it follows that
$$
\mathscr{V}_{x_1}\curlywedge \mathscr{V}_{y}\sqsubseteq \mathscr{V}_{x_2}\curlywedge \mathscr{V}_{y}.
$$
In particular, by Theorem \ref{order-theorem},
$$
(\mathscr{V}_{x_1}\curlywedge \mathscr{V}_{y})^{R}\leq (\mathscr{V}_{x_2}\curlywedge \mathscr{V}_{y})^{R}.
$$
This, together with Lemmas \ref{L-R} and \ref{Con-1}, implies that
\begin{align*}
x_{1}\star y& =\mathscr{V}_{x_1}(1)\star \mathscr{V}_{y}(1)
=(\mathscr{V}_{x_1}\curlywedge \mathscr{V}_{y})(1)\\
&=(\mathscr{V}_{x_1}\curlywedge \mathscr{V}_{y})^{R}(1)
\leq (\mathscr{V}_{x_2}\curlywedge \mathscr{V}_{y})^{R}(1)\\
&=(\mathscr{V}_{x_2}\curlywedge \mathscr{V}_{y})(1)=
\mathscr{V}_{x_2}(1)\star \mathscr{V}_{y}(1)\\
&=x_{2}\star y,
\end{align*}
i.e.,
$$
\star^{r}_{y}(x_1)=x_1\star y\leq x_{2}\star y=\star^{r}_{y}(x_2).
$$
Therefore, $\star^{r}_{y}$ is increasing.
\end{proof}

\subsection{Answer to Question \ref{Q-1}}

\begin{theorem}\label{Thm-20}
Let $\vartriangle$ be a continuous $t$-norm on $I$ and $\star$ be a binary operation on $I$.
If $\curlywedge$ is a $t$-norm on $\mathbf{L}$, then $\star$ is a $t$-norm.
\end{theorem}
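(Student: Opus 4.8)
The plan is to verify the four $t$-norm axioms (T1)--(T4) for $\star$ one by one, since each of them has already been prepared by a dedicated result earlier in this section; the proof of Theorem~\ref{Thm-20} is therefore essentially an assembly of the preceding lemmas rather than a fresh computation.

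First I would dispatch commutativity and associativity. Because $\curlywedge$ is a $t$-norm on $\mathbf{L}$, it is in particular commutative and associative, so Proposition~\ref{Commu-Asso-Thm}(1) gives (T1) and Proposition~\ref{Commu-Asso-Thm}(2) gives (T2). The neutral-element axiom (T4), that $1\star x=x\star 1=x$ for all $x\in I$, is exactly the content of Lemma~\ref{1-elemrnt}, so nothing further is required there.

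It remains to establish (T3), monotonicity in each argument, which by (T1) reduces to showing that the map $x\mapsto x\star y$ is increasing for every fixed $y\in I$. Lemma~\ref{Increasing-Lemma} already supplies this for all interior values $y\in(0,1)$, so I would only have to treat the two endpoints. For $y=0$, Lemma~\ref{0-element} gives $x\star 0=0$, a constant and hence (weakly) increasing map in $x$; for $y=1$, axiom (T4) gives $x\star 1=x$, which is increasing. Combining these with the interior case yields that $x\mapsto x\star y$ is increasing for every $y\in I$, which is precisely (T3).

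The main obstacle here is organizational rather than conceptual: Lemma~\ref{Increasing-Lemma} is phrased only for $y\in(0,1)$ because the test functions $\mathscr{V}_y$ used in its proof are built from the strictly decreasing slope $(y-1)$, whose behavior degenerates at the endpoints, so the boundary values must be handled separately by invoking Lemmas~\ref{0-element} and \ref{1-elemrnt}. Once (T1)--(T4) are all in hand, $\star$ satisfies the definition of a $t$-norm, which completes the argument.
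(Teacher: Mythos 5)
Your proposal is correct and matches the paper's proof, which likewise just assembles Proposition~\ref{Commu-Asso-Thm} (for (T1)--(T2)), Lemma~\ref{1-elemrnt} (for (T4)), and Lemma~\ref{Increasing-Lemma} (for (T3)). In fact you are slightly more careful than the paper: you explicitly handle the endpoint cases $y=0$ and $y=1$ of (T3) via Lemma~\ref{0-element} and the neutral-element identity, a detail the paper's one-line proof leaves implicit.
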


\begin{proof}
It follows directly from Proposition~\ref{Commu-Asso-Thm}, and Lemmas \ref{1-elemrnt} and \ref{Increasing-Lemma}.
\end{proof}

\medskip

Similarly, the following result can be verified.

\medskip

\begin{theorem}\label{Thm-21}
Let $\triangledown$ be a continuous $t$-conorm on $I$ and $\star$ be a binary operation on $I$.
If $\curlyvee$ is a $t$-conorm on $\mathbf{L}$, then $\star$ is a $t$-norm.
\end{theorem}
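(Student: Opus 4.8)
The plan is to obtain Theorem~\ref{Thm-21} from the already-proved Theorem~\ref{Thm-20} by a duality (reflection) argument, thereby avoiding a verbatim re-run of Lemmas~\ref{0-element}--\ref{Increasing-Lemma} for the conorm side. Write $N(t)=1-t$ and let $R\colon \mathbf{M}\to\mathbf{M}$ be the reflection $R(f)=f\circ N$, so that $R(f)(t)=f(1-t)$. Then $R$ is an involution, and since $N$ is an order-reversing involution of $I$ it preserves both normality ($\sup R(f)=\sup f$) and convexity; hence $R$ maps $\mathbf{L}$ onto $\mathbf{L}$. Let $\vartriangle$ be the $N$-dual of $\triangledown$, namely $x\vartriangle y=1-\bigl((1-x)\triangledown(1-y)\bigr)$, which is a continuous $t$-norm on $I$ because $N$ is continuous. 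I will build $\curlywedge$ from this $\vartriangle$ and the same binary operation $\star$, show that $\curlywedge$ is a $t$-norm on $\mathbf{L}$, and then quote Theorem~\ref{Thm-20} to conclude that $\star$ is a $t$-norm.

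The central computation is the conjugation identity
\begin{equation*}
R(f\curlywedge g)=R(f)\curlyvee R(g),\qquad\text{equivalently}\qquad f\curlywedge g=R\bigl(R(f)\curlyvee R(g)\bigr).
\end{equation*}
To prove it I would expand $(R(f)\curlywedge R(g))(x)=\sup\{f(1-a)\star g(1-b): a\vartriangle b=x\}$ and substitute $y=1-a$, $z=1-b$; by the definition of the $N$-dual, the constraint $a\vartriangle b=x$ becomes $y\triangledown z=1-x$, so the supremum equals $(f\curlyvee g)(1-x)=R(f\curlyvee g)(x)$. Granting this identity, the axioms transfer formally: commutativity and associativity of $\curlywedge$ are immediate from those of $\curlyvee$; and since $R(\chi_{\{0\}})=\chi_{\{1\}}$, the neutral-element axiom $f\curlyvee\chi_{\{0\}}=f$ (axiom (O3')) yields $f\curlywedge\chi_{\{1\}}=R\bigl(R(f)\curlyvee\chi_{\{0\}}\bigr)=f$.

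It remains to transfer monotonicity, which is the one place where the order $\sqsubseteq$ must be handled with care. The key sublemma is that $R$ is order-reversing for $\sqsubseteq$, i.e.\ $R(f)\sqsubseteq R(g)$ iff $g\sqsubseteq f$. I would prove this from Theorem~\ref{order-theorem} together with the elementary identities $(R(f))^{L}=R(f^{R})$ and $(R(f))^{R}=R(f^{L})$, which follow directly from the definitions of $f^{L}$, $f^{R}$ and the order-reversal by $N$; since $R$ preserves the pointwise order, the conditions ``$(R(g))^{L}\le (R(f))^{L}$ and $(R(f))^{R}\le (R(g))^{R}$'' reduce exactly to ``$f^{L}\le g^{L}$ and $g^{R}\le f^{R}$'', that is, to $g\sqsubseteq f$. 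With this in hand, if $g\sqsubseteq h$ then $R(h)\sqsubseteq R(g)$, so the monotonicity of $\curlyvee$ (axiom (O4)) gives $R(f)\curlyvee R(h)\sqsubseteq R(f)\curlyvee R(g)$, and applying the order-reversing $R$ once more yields $f\curlywedge g\sqsubseteq f\curlywedge h$. Thus $\curlywedge$ satisfies (O1)--(O4) and is a $t$-norm on $\mathbf{L}$, and Theorem~\ref{Thm-20} applied to the continuous $t$-norm $\vartriangle$ finishes the proof.

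The main obstacle is purely the bookkeeping in the two structural facts above — the change of variables establishing the conjugation identity, and the order-reversal of $\sqsubseteq$ under $R$; once these are verified, every axiom transfers mechanically. If one prefers to imitate Section~3 directly instead, the same conclusion follows by running the conorm analogues of the earlier lemmas: the dual of Lemma~\ref{Con-1} reads $(f\curlyvee g)(0)=f(0)\star g(0)$ (because, for a $t$-conorm $\triangledown$, $y\triangledown z=0$ forces $y=z=0$), and one replaces the test functions $\mathscr{W}_{x}$ and $\mathscr{V}_{x}$ by their reflections $\mathscr{W}_{x}\circ N$ and $\mathscr{V}_{x}\circ N$, evaluating at $0$ rather than at $1$ throughout.
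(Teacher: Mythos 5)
Your proposal is correct, and it takes a genuinely different route from the paper. For Theorem~\ref{Thm-21} the paper records no argument beyond ``Similarly, the following result can be verified'': its intended proof is a parallel re-run of Section~3 on the conorm side --- the dual of Lemma~\ref{Con-1}, namely $(f\curlyvee g)(0)=f(0)\star g(0)$ (since $y\triangledown z=0$ forces $y=z=0$ for a $t$-conorm), followed by conorm analogues of Proposition~\ref{Commu-Asso-Thm} and Lemmas~\ref{0-element}--\ref{Increasing-Lemma} with reflected test functions and evaluation at $0$; your closing paragraph sketches exactly this fallback. Your main argument instead reduces Theorem~\ref{Thm-21} to Theorem~\ref{Thm-20} by a single structural duality, and I have checked its two load-bearing facts: the change of variables $y=1-a$, $z=1-b$ does carry the constraint $a\vartriangle b=x$ bijectively onto $y\triangledown z=1-x$, giving the conjugation identity (your two displayed forms are equivalent since $R$ is an involution); and the swap identities $(R(f))^{L}=R(f^{R})$, $(R(f))^{R}=R(f^{L})$ combined with Theorem~\ref{order-theorem} do show $R$ reverses $\sqsubseteq$ --- an application that is legitimate because $R$ preserves $\mathbf{L}$ and $\curlyvee$ maps $\mathbf{L}^{2}$ into $\mathbf{L}$, so Theorem~\ref{order-theorem} is only ever invoked for elements of $\mathbf{L}$. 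With these facts, (O1), (O2), (O3) (from (O3') together with $R(\chi_{\{1\}})=\chi_{\{0\}}$), and (O4) transfer mechanically, $\curlywedge$ built from the $N$-dual continuous $t$-norm $\vartriangle$ is a $t$-norm on $\mathbf{L}$ of exactly the form \eqref{O-1}, and Theorem~\ref{Thm-20} applies. What your reduction buys is economy and an explicit explanation of why the paper's ``similarly'' works at all: the two theorems are literally conjugate under $R$, so none of the monotonicity or neutral-element lemmas need to be re-derived. What the paper's implicit route buys is self-containedness, avoiding the (routine but not free) verifications of the conjugation identity and of the order-reversal of $\sqsubseteq$ under $R$.
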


\begin{remark}
Theorems \ref{Thm-20} and \ref{Thm-21} show that a binary operation $\star$ on $I$,
ensuring that $\curlywedge$ is a $t$-norm (thus a $t_{r}$-norm) on
$\mathbf{L}$ or $\curlyvee$ is a $t$-conorm (thus a $t_{r}$-conorm) on
$\mathbf{L}$, must be a $t$-norm. This give a negative answer to Question~\ref{Q-1}.
\end{remark}

\medskip

Combining together Theorems \ref{Thm-20}, \ref{Thm-21}, and \cite[Proposition~14]{HCT2015},
one obtains the following.

\medskip

\begin{theorem}
Let $\vartriangle$ be a continuous $t$-norm, $\triangledown$ be a continuous $t$-conorm, and
$\star$ be a continuous binary operation on $I$. Then, the following statements are equivalent:
\begin{enumerate}[(1)]
  \item $\star$ is a $t$-norm on $I$;
  \item $\curlywedge$ is a $t_{r}$-norm on $\mathbf{L}$;
  \item $\curlywedge$ is a $t$-norm on $\mathbf{L}$;
  \item $\curlyvee$ is a $t_{r}$-conorm on $\mathbf{L}$;
  \item $\curlyvee$ is a $t$-conorm on $\mathbf{L}$.
\end{enumerate}
\end{theorem}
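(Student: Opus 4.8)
The plan is to prove the five-fold equivalence by assembling two short cycles of implications, both pivoting on statement~(1): one cycle on the $t$-norm side $(1)\Rightarrow(2)\Rightarrow(3)\Rightarrow(1)$ handling $\curlywedge$, and a parallel cycle on the $t$-conorm side $(1)\Rightarrow(4)\Rightarrow(5)\Rightarrow(1)$ handling $\curlyvee$. Because the $t_r$-variants are strictly stronger than the corresponding basic ($t$-norm/$t$-conorm) variants, and because the substantive converses have already been proved in Theorems~\ref{Thm-20} and~\ref{Thm-21}, most of the work reduces to invoking the correct reference in the correct direction.

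For the first cycle I would argue as follows. The implication $(1)\Rightarrow(2)$ is supplied by \cite[Proposition~14]{HCT2015}: combining the standing continuity hypothesis on $\star$ with~(1) makes $\star$ a continuous $t$-norm, and the cited result then guarantees that $\curlywedge$ is a $t_r$-norm on $\mathbf{L}$. The implication $(2)\Rightarrow(3)$ is purely definitional, since by Definition~\ref{Def-1} every $t_r$-norm satisfies axioms (O1)--(O7), and in particular the basic axioms (O1), (O2), (O3), (O4) that define a $t$-norm on $\mathbf{L}$. Finally, $(3)\Rightarrow(1)$ is exactly Theorem~\ref{Thm-20}. This closes the cycle and yields $(1)\Leftrightarrow(2)\Leftrightarrow(3)$.

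The second cycle follows the same template on the conorm side. Here $(1)\Rightarrow(4)$ again uses \cite[Proposition~14]{HCT2015}, now producing a $t_r$-conorm; $(4)\Rightarrow(5)$ holds because a $t_r$-conorm satisfies the basic axioms (O1), (O2), (O3$'$), (O4) defining a $t$-conorm on $\mathbf{L}$; and $(5)\Rightarrow(1)$ is Theorem~\ref{Thm-21}. This gives $(1)\Leftrightarrow(4)\Leftrightarrow(5)$, and combining the two cycles shows that all five statements are equivalent.

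The one point requiring genuine care is the asymmetric role of the continuity assumptions, and I would state it explicitly to avoid any circular reasoning. The continuity of $\star$ is needed \emph{only} for the forward implications $(1)\Rightarrow(2)$ and $(1)\Rightarrow(4)$, where \cite[Proposition~14]{HCT2015} requires $\star$ to be a continuous $t$-norm; by contrast, the reverse implications $(3)\Rightarrow(1)$ and $(5)\Rightarrow(1)$ rest on Theorems~\ref{Thm-20} and~\ref{Thm-21}, which assume only that $\vartriangle$ and $\triangledown$ are continuous and place no continuity restriction on $\star$. Thus there is no main obstacle of a computational nature; the only thing to verify is that each cited result is applied in its intended direction, so that the continuity hypotheses line up consistently across the two cycles.
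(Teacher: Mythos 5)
Your proposal is correct and matches the paper's own argument, which likewise obtains the equivalence by combining Theorems~\ref{Thm-20} and \ref{Thm-21} (for the implications $(3)\Rightarrow(1)$ and $(5)\Rightarrow(1)$) with \cite[Proposition~14]{HCT2015} (for $(1)\Rightarrow(2)$ and $(1)\Rightarrow(4)$), the steps $(2)\Rightarrow(3)$ and $(4)\Rightarrow(5)$ being immediate from Definition~\ref{Def-1}. Your explicit remark on where the continuity of $\star$ is actually used is a helpful clarification, but it does not change the route.
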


\section{Further Study on the Binary Operation $\curlywedge$}
Let $\star$ be a continuous $t$-norm on $I$ and $\vartriangle$ be a surjective
binary operation on $I$. Define the binary operation
$\curlywedge: \mathbf{M}^2\rightarrow \mathbf{M}$ as follows:
for $f, g\in \mathbf{M}$,
\begin{equation}\label{e-3}
(f\curlywedge g)(x)=\sup\{f(y)\star g(z): y\vartriangle z =x\}.
\end{equation}
Here, the surjection assumption on $\vartriangle$ is necessary, because $(f\curlywedge g)(x)$
is not well defined for every point $x$ in $I\setminus \vartriangle(I^{2})$, if $\vartriangle$
is not surjective. Motivated by Question~\ref{Q-1}, a fundamental question is:
{\it Apart from the $t$-norms, does there exist other binary operation `$\vartriangle$' on
$I$ such that `$\curlywedge$' is a $t_r$-norm on $\mathbf{L}$?}

This section will also give a negative answer to this question.

\begin{lemma}\label{order-Lemma-2}
For $x_1, x_2\in I$, $\chi_{\{x_1\}}\sqsubseteq \chi_{\{x_2\}}$ if and only if $x_1\leq x_2$.
\end{lemma}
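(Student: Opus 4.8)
The plan is to reduce the equivalence to the order characterization of Theorem~\ref{order-theorem}. That theorem applies here because each singleton characteristic function $\chi_{\{x\}}$ is normal (its supremum is $1$) and convex (if $\chi_{\{x\}}(a)\wedge\chi_{\{x\}}(c)=1$ for $a\leq b\leq c$, then $a=c=x$ forces $b=x$), hence lies in $\mathbf{L}$. By Theorem~\ref{order-theorem}, the relation $\chi_{\{x_1\}}\sqsubseteq\chi_{\{x_2\}}$ is equivalent to the two pointwise inequalities $\chi_{\{x_2\}}^{L}\leq\chi_{\{x_1\}}^{L}$ and $\chi_{\{x_1\}}^{R}\leq\chi_{\{x_2\}}^{R}$, so the whole statement reduces to computing and comparing the $L$- and $R$-transforms of singletons.

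First I would compute these transforms directly from their definitions, obtaining
$$
\chi_{\{x\}}^{L}(t)=\begin{cases}0,& t<x,\\ 1,& t\geq x,\end{cases}
\qquad
\chi_{\{x\}}^{R}(t)=\begin{cases}1,& t\leq x,\\ 0,& t>x,\end{cases}
$$
so that $\chi_{\{x\}}^{L}$ is the increasing unit step jumping up at $x$ and $\chi_{\{x\}}^{R}$ is the decreasing unit step jumping down at $x$.

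For sufficiency, assuming $x_1\leq x_2$, I would verify the two inequalities by a level-set comparison: the inclusion $\{t:t\geq x_2\}\subseteq\{t:t\geq x_1\}$ shows that wherever $\chi_{\{x_2\}}^{L}$ takes the value $1$ so does $\chi_{\{x_1\}}^{L}$, giving $\chi_{\{x_2\}}^{L}\leq\chi_{\{x_1\}}^{L}$, and symmetrically $\{t:t\leq x_1\}\subseteq\{t:t\leq x_2\}$ yields $\chi_{\{x_1\}}^{R}\leq\chi_{\{x_2\}}^{R}$. For necessity I would argue by contraposition: if $x_1>x_2$, evaluating at the single point $t=x_1$ gives $\chi_{\{x_1\}}^{R}(x_1)=1$ while $\chi_{\{x_2\}}^{R}(x_1)=0$, which violates $\chi_{\{x_1\}}^{R}\leq\chi_{\{x_2\}}^{R}$ and hence blocks $\chi_{\{x_1\}}\sqsubseteq\chi_{\{x_2\}}$.

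This argument presents no serious obstacle; the only point demanding attention is the orientation reversal built into the $L$-component of Theorem~\ref{order-theorem}, namely that the larger index $x_2$ produces the pointwise \emph{smaller} function $\chi_{\{x_2\}}^{L}$. Keeping the two directions of inequality straight is all that is required, after which a single separating point closes the necessity half.
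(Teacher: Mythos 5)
Your proof is correct and follows essentially the same route as the paper: both apply Theorem~\ref{order-theorem} after computing the step functions $\chi_{\{x\}}^{L}$ and $\chi_{\{x\}}^{R}$ explicitly, and your computations and both directions of the comparison check out. Your write-up merely adds details the paper leaves implicit (that $\chi_{\{x\}}\in\mathbf{L}$, and the separating point $t=x_1$ for necessity), so there is nothing to change.
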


\begin{proof}
Firstly, it can be verified that, for any $x\in I$,
$$
\chi^{L}_{\{x\}}(t)=\begin{cases}
0, & t\in [0, x),\\
1, & t\in [x, 1],
\end{cases}
$$
and
$$
\chi^{R}_{\{x\}}(t)=\begin{cases}
1, & t\in [0, x],\\
0, & t\in (x, 1].
\end{cases}
$$
This, together with Theorem~\ref{order-theorem}, implies that
\begin{eqnarray*}
&&\chi_{\{x_1\}}\sqsubseteq \chi_{\{x_2\}}\\
&\Leftrightarrow& \chi^{L}_{\{x_2\}} \leq \chi^{L}_{\{x_1\}} \text{ and } \chi^{R}_{\{x_1\}} \leq \chi^{R}_{\{x_2\}}\\
&\Leftrightarrow& x_1\leq x_2.
\end{eqnarray*}
\end{proof}

\begin{lemma}\label{operation-Lemma}
Let $\star$ be a continuous $t$-norm on $I$ and $\vartriangle$ be a binary operation on $I$.
Then,
for any $x_1, x_2\in I$, $\chi_{\{x_1\}}\curlywedge \chi_{\{x_2\}}=\chi_{\{x_1\vartriangle x_2\}}$.
\end{lemma}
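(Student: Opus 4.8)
The plan is to evaluate $(\chi_{\{x_1\}}\curlywedge \chi_{\{x_2\}})(x)$ directly from the defining convolution \eqref{e-3} and to show it agrees with $\chi_{\{x_1\vartriangle x_2\}}(x)$ at every point $x\in I$. First I would record the two arithmetic facts about the $t$-norm $\star$ that drive the whole argument: by the neutral-element axiom (T4) one has $1\star 1=1$, and since $0\star 1=0$ together with monotonicity (T3) gives $0\star a\leq 0\star 1=0$ for $a\leq 1$, the value $0$ is absorbing, i.e. $0\star a=a\star 0=0$ for every $a\in I$. (Note that continuity of $\star$ plays no role here; only the $t$-norm axioms are used.)

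Next I would compute the integrand. For $y,z\in I$, the product $\chi_{\{x_1\}}(y)\star \chi_{\{x_2\}}(z)$ equals $1\star 1=1$ exactly when $y=x_1$ and $z=x_2$; otherwise at least one of the two factors is $0$, so by the absorption property the product is $0$. Thus the unique pair $(y,z)$ capable of contributing a nonzero value to the supremum is $(x_1,x_2)$, and it contributes precisely $1$.

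Then I would fix $x\in I$ and split into two cases according to whether $x=x_1\vartriangle x_2$. If $x=x_1\vartriangle x_2$, the pair $(x_1,x_2)$ satisfies the constraint $y\vartriangle z=x$ and yields the value $1$, so the supremum equals $1=\chi_{\{x_1\vartriangle x_2\}}(x)$. If $x\neq x_1\vartriangle x_2$, then no admissible pair $(y,z)$ with $y\vartriangle z=x$ can coincide with $(x_1,x_2)$, whence every term of the supremum is $0$ and the supremum is $0=\chi_{\{x_1\vartriangle x_2\}}(x)$ (using the convention $\sup\emptyset=0$ should $x$ fall outside the range of $\vartriangle$). Combining the two cases yields the claimed pointwise equality $\chi_{\{x_1\}}\curlywedge \chi_{\{x_2\}}=\chi_{\{x_1\vartriangle x_2\}}$.

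The main point is bookkeeping rather than any real mathematical difficulty: one must isolate the single pair $(x_1,x_2)$ as the unique source of a nonzero value, and invoke the empty-supremum convention when $x\notin \vartriangle(I^2)$. The surjectivity hypothesis on $\vartriangle$ standing in \eqref{e-3} makes that latter subcase vacuous, so that every admissible index set is nonempty, but the identity in fact holds without it.
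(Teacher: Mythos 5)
Your proof is correct and follows essentially the same route as the paper: both arguments reduce the convolution to the observation that $\chi_{\{x_1\}}(y)\star\chi_{\{x_2\}}(z)$ takes values in $\{0,1\}$ and equals $1$ precisely when $(y,z)=(x_1,x_2)$ (the paper cites its Lemma~\ref{1-Lemma}, $a\star b=1\Leftrightarrow a=b=1$, where you derive the absorption $0\star a=0$ directly from (T3) and (T4)). Your two small additions --- the explicit empty-supremum case when $x\notin\vartriangle(I^2)$ and the remark that continuity of $\star$ is never used --- are both accurate but do not change the substance of the argument.
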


\begin{proof}
Since $\star$ is a continuous $t$-norm, applying Lemma~\ref{1-Lemma} gives
\begin{enumerate}[(a)]
  \item for $y, z\in I$, $\chi_{\{x_1\}}(y)\star \chi_{\{x_2\}}(z)\in\{0, 1\}$;
  \item $\chi_{\{x_1\}}(y)\star \chi_{\{x_2\}}(z)=1$ if and only if $y=x_1$ and $z=x_2$.
\end{enumerate}
This, together with $(\chi_{\{x_1\}}\curlywedge \chi_{\{x_2\}})(x)
=\sup\{\chi_{\{x_1\}}(y)\star \chi_{\{x_2\}}(z): y\vartriangle z=x\}$, implies that
$$
\chi_{\{x_1\}}\curlywedge \chi_{\{x_2\}}=\chi_{\{x_1\vartriangle x_2\}}.
$$
\end{proof}

\begin{lemma}\label{Commu-Asso-Thm-2}
Let $\star$ be a continuous $t$-norm on $I$ and $\vartriangle$ be a binary operation on $I$.
Then, \begin{enumerate}[(1)]
  \item $\curlywedge$ is commutative on $\mathbf{L}$ if and only if $\vartriangle$ is commutative;
  \item If $\curlywedge$ is associative on $\mathbf{L}$, then $\vartriangle$ is
associative.
\end{enumerate}
\end{lemma}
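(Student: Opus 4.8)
The plan is to exploit the fact that this lemma is the exact ``dual'' of Proposition~\ref{Commu-Asso-Thm}, with the roles of $\star$ and $\vartriangle$ interchanged, and that the whole engine is now Lemma~\ref{operation-Lemma}. That lemma says precisely that $\curlywedge$ acts on the singleton characteristic functions as $\vartriangle$ acts on the underlying points: $\chi_{\{x_1\}}\curlywedge\chi_{\{x_2\}}=\chi_{\{x_1\vartriangle x_2\}}$. So instead of the affine ``test functions'' $\mathscr{V}_x$ used in Proposition~\ref{Commu-Asso-Thm}, I would use the family $\mathbf{J}=\{\chi_{\{x\}}:x\in I\}\subseteq\mathbf{L}$ as probes, transfer each algebraic identity for $\curlywedge$ on $\mathbf{J}$ back to $\vartriangle$, and recover the point identity from injectivity of $x\mapsto\chi_{\{x\}}$. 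Injectivity is immediate: by Lemma~\ref{order-Lemma-2}, $\chi_{\{a\}}=\chi_{\{b\}}$ forces $a\leq b$ and $b\leq a$, hence $a=b$ (or one simply reads it off the definition of $\chi$).

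For part~(1), the sufficiency is a direct computation that works on all of $\mathbf{M}$, so in particular on $\mathbf{L}$: since $\star$ is a $t$-norm it is commutative, so $f(y)\star g(z)=g(z)\star f(y)$, and relabeling the index pair $(y,z)\mapsto(z,y)$ in the supremum defining $(f\curlywedge g)(x)$, together with commutativity of $\vartriangle$ (which turns $y\vartriangle z=x$ into $z\vartriangle y=x$), identifies that supremum with the one defining $(g\curlywedge f)(x)$; this is the same argument referenced from \cite[Proposition~1]{HCT2015}. For the necessity, fix arbitrary $u,v\in I$ and apply Lemma~\ref{operation-Lemma} twice to get $\chi_{\{u\}}\curlywedge\chi_{\{v\}}=\chi_{\{u\vartriangle v\}}$ and $\chi_{\{v\}}\curlywedge\chi_{\{u\}}=\chi_{\{v\vartriangle u\}}$. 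Commutativity of $\curlywedge$ on $\mathbf{L}$ equates the two left-hand sides, so $\chi_{\{u\vartriangle v\}}=\chi_{\{v\vartriangle u\}}$, and injectivity yields $u\vartriangle v=v\vartriangle u$.

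For part~(2), I would proceed identically with three points $u,v,w\in I$ and the three probes $\chi_{\{u\}},\chi_{\{v\}},\chi_{\{w\}}\in\mathbf{L}$. Applying Lemma~\ref{operation-Lemma} stepwise gives $(\chi_{\{u\}}\curlywedge\chi_{\{v\}})\curlywedge\chi_{\{w\}}=\chi_{\{(u\vartriangle v)\vartriangle w\}}$ and $\chi_{\{u\}}\curlywedge(\chi_{\{v\}}\curlywedge\chi_{\{w\}})=\chi_{\{u\vartriangle(v\vartriangle w)\}}$; associativity of $\curlywedge$ on $\mathbf{L}$ then forces these two singleton functions to coincide, and injectivity delivers $(u\vartriangle v)\vartriangle w=u\vartriangle(v\vartriangle w)$. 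I do not expect a genuine obstacle here: once Lemma~\ref{operation-Lemma} is available, the only points requiring a word of care are that each $\chi_{\{x\}}$ indeed lies in $\mathbf{L}$ (normality and convexity of singleton characteristic functions) so that the probes are legitimate test elements, and the injectivity of $x\mapsto\chi_{\{x\}}$ used to pass from an equality of functions back to an equality of points.
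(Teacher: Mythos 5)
Your proposal is correct and takes essentially the same route as the paper: the paper's proof likewise uses the singleton characteristic functions $\chi_{\{x\}}\in\mathbf{L}$ as probes, applies Lemma~\ref{operation-Lemma} to convert commutativity and associativity of $\curlywedge$ into the identities $\chi_{\{u\vartriangle v\}}=\chi_{\{v\vartriangle u\}}$ and $\chi_{\{(u\vartriangle v)\vartriangle w\}}=\chi_{\{u\vartriangle(v\vartriangle w)\}}$, and then reads off the point identities, treating the sufficiency in part (1) as trivial just as you do. Your explicit remarks on the injectivity of $x\mapsto\chi_{\{x\}}$ (via Lemma~\ref{order-Lemma-2}) and on $\chi_{\{x\}}$ belonging to $\mathbf{L}$ are sound and, if anything, slightly more careful than the paper's write-up.
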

\begin{proof}
(1) The sufficiency holds trivially. It remains to check the necessity.

For $x, y\in I$, since $\curlywedge$ is commutative, it follows from
Lemma~\ref{operation-Lemma} that
$$
\chi_{\{x\vartriangle y\}}=\chi_{\{x\}}\curlywedge \chi_{\{y\}}
=\chi_{\{y\}}\curlywedge \chi_{\{x\}}=\chi_{\{y\vartriangle x\}},
$$
implying that
$$
x\vartriangle y=y\vartriangle x.
$$
Thus,
$\vartriangle$ is commutative.

(2) For $x, y, z\in I$, since $\curlywedge$ is associative, it follows from
Lemma~\ref{operation-Lemma} that
\begin{align*}
\chi_{(x\vartriangle y)\vartriangle z}&=\chi_{\{x\vartriangle y\}}\curlywedge \chi_{\{z\}}=(\chi_{\{x\}}\curlywedge \chi_{\{y\}})\curlywedge \chi_{\{z\}}\\
&=\chi_{\{x\}}\curlywedge (\chi_{\{y\}}\curlywedge \chi_{\{z\}})=\chi_{\{x\}}\curlywedge\chi_{\{y\vartriangle z\}}\\
&=\chi_{x\vartriangle (y\vartriangle z)},
\end{align*}
implying that
$$
(x\vartriangle y)\vartriangle x=x\vartriangle (y\vartriangle z).
$$
Thus,
$\vartriangle$ is associative.
\end{proof}

\begin{lemma}\label{1-elemrnt-2}
Let $\star$ be a continuous $t$-norm on $I$ and $\vartriangle$ be a binary operation on $I$.
If $\curlywedge$ is a $t$-norm on $\mathbf{L}$, then
$1\vartriangle x=x\vartriangle 1=x$ for all $x\in I$.
\end{lemma}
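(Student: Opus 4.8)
The plan is to reduce the neutral-element property of $\vartriangle$ at the point $1$ directly to the neutral-element axiom of $\curlywedge$ at the function $\chi_{\{1\}}$, using the correspondence between single-point characteristic functions established in Lemma~\ref{operation-Lemma}. In contrast to the analogous statement for $\star$ (Lemma~\ref{1-elemrnt}), which required the auxiliary functions $\mathscr{W}_{x}$, the intermediate value theorem, and a separate zero-element lemma, here the whole argument becomes immediate once one observes that $\chi_{\{a\}}\curlywedge\chi_{\{b\}}=\chi_{\{a\vartriangle b\}}$ already translates $\curlywedge$ acting on singletons into $\vartriangle$ acting on points.

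First I would note that, since $\curlywedge$ is a $t$-norm on $\mathbf{L}$ in the sense of the basic axioms, its neutral element is $\chi_{\{1\}}$; that is, by the neutral-element axiom (O3) together with commutativity (O1), $f\curlywedge\chi_{\{1\}}=\chi_{\{1\}}\curlywedge f=f$ for every $f\in\mathbf{L}$. Fixing an arbitrary $x\in I$, I would apply this with $f=\chi_{\{x\}}$, which indeed belongs to $\mathbf{L}$ because any single-point characteristic function is normal and convex.

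Combining the neutral-element identity with Lemma~\ref{operation-Lemma} then yields
$$
\chi_{\{x\vartriangle 1\}}=\chi_{\{x\}}\curlywedge\chi_{\{1\}}=\chi_{\{x\}}
\quad\text{and}\quad
\chi_{\{1\vartriangle x\}}=\chi_{\{1\}}\curlywedge\chi_{\{x\}}=\chi_{\{x\}}.
$$
Since $\chi_{\{a\}}=\chi_{\{b\}}$ forces $a=b$, this gives $x\vartriangle 1=x$ and $1\vartriangle x=x$, as required. I do not expect a genuine obstacle in this argument: the analytic content (continuity of $\star$, exploited through Lemma~\ref{1-Lemma}) has already been absorbed into Lemma~\ref{operation-Lemma}, so the only points needing a line of justification are the membership $\chi_{\{x\}}\in\mathbf{L}$ and the injectivity of the map $a\mapsto\chi_{\{a\}}$, both of which are routine.
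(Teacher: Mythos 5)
Your proposal is correct and follows essentially the same route as the paper: the paper's proof likewise applies the neutral-element axiom with $f=\chi_{\{x\}}$ and invokes Lemma~\ref{operation-Lemma} to get $\chi_{\{1\vartriangle x\}}=\chi_{\{1\}}\curlywedge\chi_{\{x\}}=\chi_{\{x\}}=\chi_{\{x\}}\curlywedge\chi_{\{1\}}=\chi_{\{x\vartriangle 1\}}$, then reads off $1\vartriangle x=x=x\vartriangle 1$. Your additional remarks on $\chi_{\{x\}}\in\mathbf{L}$ and the injectivity of $a\mapsto\chi_{\{a\}}$ are accurate and only make explicit what the paper leaves tacit.
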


\begin{proof}
For any $x\in I$, since $\chi_{\{1\}}$ is an neutral element,
applying Lemma \ref{operation-Lemma} yields that
$$\chi_{\{1\vartriangle x\}}=\chi_{\{1\}}\curlywedge \chi_{\{x\}}=\chi_{\{x\}}=\chi_{\{x\}}
\curlywedge \chi_{\{1\}}=\chi_{\{x\vartriangle 1\}}.$$
Thus,
$1\vartriangle x=x=x\vartriangle 1$.
\end{proof}

\begin{lemma}\label{Increasing-Lemma-2}
Let $\star$ be a continuous $t$-norm on $I$ and $\vartriangle$ be a binary operation on $I$.
If $\curlywedge$ is a $t$-norm on $\mathbf{L}$, then, for any $y\in (0, 1)$,
the functions $\vartriangle^{r}_{y}$ and $\vartriangle^{l}_{y}$ is increasing, where
$\vartriangle^{r}_{y}(x)=x\vartriangle y$ and $\vartriangle^{l}_{y}(x)=y\vartriangle x$ for any $x\in I$.
\end{lemma}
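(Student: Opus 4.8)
The plan is to mirror the strategy used for Lemma~\ref{Increasing-Lemma}, but to exploit the cleaner behaviour of singleton characteristic functions under $\curlywedge$ recorded in Lemma~\ref{operation-Lemma}, which lets me bypass entirely the intermediate-value-theorem argument and the auxiliary functions $\mathscr{V}_x$. The whole proof is a transport of the order relation back and forth through the dictionary $x\leftrightarrow\chi_{\{x\}}$.

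First I would reduce to a single map. Since $\curlywedge$ is a $t$-norm it is commutative, so Lemma~\ref{Commu-Asso-Thm-2}(1) forces $\vartriangle$ to be commutative; hence $\vartriangle^{r}_{y}=\vartriangle^{l}_{y}$, and it suffices to prove that $\vartriangle^{r}_{y}$ is increasing. Next, fix $y$ and take any $x_1,x_2\in I$ with $x_1\leq x_2$. By Lemma~\ref{order-Lemma-2} this is equivalent to $\chi_{\{x_1\}}\sqsubseteq\chi_{\{x_2\}}$. Because $\curlywedge$ is a $t$-norm on $\mathbf{L}$, it is increasing in each argument (axiom (O4)), so applying it against the fixed function $\chi_{\{y\}}$ preserves the order:
$$\chi_{\{x_1\}}\curlywedge\chi_{\{y\}}\sqsubseteq\chi_{\{x_2\}}\curlywedge\chi_{\{y\}}.$$
I would then rewrite both sides using Lemma~\ref{operation-Lemma}, which identifies $\chi_{\{x_i\}}\curlywedge\chi_{\{y\}}$ with $\chi_{\{x_i\vartriangle y\}}$, turning the displayed relation into $\chi_{\{x_1\vartriangle y\}}\sqsubseteq\chi_{\{x_2\vartriangle y\}}$. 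A final application of Lemma~\ref{order-Lemma-2} translates this back into the numerical inequality $x_1\vartriangle y\leq x_2\vartriangle y$, that is $\vartriangle^{r}_{y}(x_1)\leq\vartriangle^{r}_{y}(x_2)$, which is exactly monotonicity of $\vartriangle^{r}_{y}$.

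I do not anticipate a genuine obstacle here. The only point requiring care is that Lemmas~\ref{order-Lemma-2} and~\ref{operation-Lemma} be invoked in the correct sequence (order on $I$ $\Rightarrow$ order on singletons $\Rightarrow$ order on the $\curlywedge$-images $\Rightarrow$ back to order on $I$), and that the continuity of the $t$-norm $\star$ is used only implicitly, through the hypotheses of Lemma~\ref{operation-Lemma}, which is precisely what guarantees that $\curlywedge$ sends a pair of singleton characteristic functions to a single singleton. I note in passing that the restriction $y\in(0,1)$ in the statement plays no role in this argument, as the chain of implications is valid for every $y\in I$; I keep it only to match the formulation of Lemma~\ref{Increasing-Lemma}.
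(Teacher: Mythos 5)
Your proposal is correct and follows essentially the same route as the paper's own proof: the identical reduction to $\vartriangle^{r}_{y}$ via Lemma~\ref{Commu-Asso-Thm-2}, then the same chain of translations (Lemma~\ref{order-Lemma-2}, monotonicity axiom (O4) against the fixed argument $\chi_{\{y\}}$, Lemma~\ref{operation-Lemma}, and Lemma~\ref{order-Lemma-2} again). Your side remark that the hypothesis $y\in(0,1)$ is never actually used is accurate as well --- the paper's argument likewise works for every $y\in I$.
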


\begin{proof}
It follows from Lemma~\ref{Commu-Asso-Thm-2} that $\vartriangle^{r}_{y}=\vartriangle^{l}_{y}$.
So, it suffices to prove that $\vartriangle^{r}_{y}$ is increasing.

For $0\leq x_1\leq x_2\leq 1$,
applying Lemma~\ref{order-Lemma-2} follows that
$$
\chi_{\{x_1\}}\sqsubseteq \chi_{\{x_2\}}.
$$
Since $\curlywedge$ is increasing in each argument, applying Lemma~\ref{operation-Lemma} yields that
$$
\chi_{\{x_1\vartriangle y\}}=\chi_{\{x_1\}}\curlywedge \chi_{\{y\}}\sqsubseteq \chi_{\{x_2\}}\curlywedge \chi_{\{y\}}
=\chi_{\{x_2\vartriangle y\}}.
$$
This, together with Lemma~\ref{order-Lemma-2}, implies that
$$
\vartriangle^{R}_{y}(x_1)=x_1\vartriangle y \leq x_2\vartriangle y=\vartriangle^{R}_{y}(x_2).
$$
Therefore, $\vartriangle^{R}_{y}$ is increasing.
\end{proof}

\medskip

Combining together Lemmas \ref{Commu-Asso-Thm-2}--\ref{Increasing-Lemma-2} and \cite[Proposition~14]{HCT2015}
immediately yields the following result.

\medskip

\begin{theorem}
Let $\star$ be a continuous $t$-norm on $I$ and $\vartriangle$ be
a continuous binary operation on $I$. Then, the following statements are equivalent:
\begin{enumerate}[(1)]
  \item $\vartriangle$ is a $t$-norm on $I$;
  \item $\curlywedge$ is a $t_{r}$-norm on $\mathbf{L}$;
  \item $\curlywedge$ is a $t$-norm on $\mathbf{L}$.
\end{enumerate}
\end{theorem}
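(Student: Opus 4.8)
The plan is to prove the three statements equivalent by establishing the cycle $(1)\Rightarrow(2)\Rightarrow(3)\Rightarrow(1)$, assembling the three lemmas of this section together with the construction result of \cite{HCT2015}; none of the individual implications requires fresh machinery, so the argument is largely a matter of bookkeeping, with one genuine subtlety at the boundary.

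For $(1)\Rightarrow(2)$ I would invoke \cite[Proposition~14]{HCT2015} directly: under the standing hypotheses that both $\star$ and $\vartriangle$ are continuous $t$-norms on $I$, that proposition asserts that the convolution operation $\curlywedge$ of \eqref{e-3} is a $t_r$-norm on $\mathbf{L}$. The implication $(2)\Rightarrow(3)$ is then immediate from Definition~\ref{Def-1}: a $t_r$-norm satisfies axioms (O1)--(O7), hence in particular the basic axioms (O1), (O2), (O3), and (O4), and an operation on $\mathbf{L}$ complying with exactly these basic axioms is by definition a $t$-norm on $\mathbf{L}$; thus every $t_r$-norm is automatically a $t$-norm and nothing further need be checked.

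The substance lies in $(3)\Rightarrow(1)$, where I would verify the four $t$-norm axioms (T1)--(T4) for $\vartriangle$ using the lemmas already established. Commutativity (T1) and associativity (T2) come from Lemma~\ref{Commu-Asso-Thm-2}, the neutral-element law $1\vartriangle x=x\vartriangle 1=x$ of (T4) from Lemma~\ref{1-elemrnt-2}, and monotonicity in each argument (T3) from Lemma~\ref{Increasing-Lemma-2}. The one point demanding care is that Lemma~\ref{Increasing-Lemma-2} only shows that $x\mapsto x\vartriangle y$ is increasing for $y\in(0,1)$, so the boundary slices must be treated separately. For $y=1$ monotonicity is trivial since $x\vartriangle 1=x$, while for $y=0$ I would pass to a limit: for fixed $x_1\leq x_2$ and every $y\in(0,1)$ one has $x_1\vartriangle y\leq x_2\vartriangle y$, and letting $y\to 0^{+}$ the continuity of $\vartriangle$ (a hypothesis of the theorem) yields $x_1\vartriangle 0\leq x_2\vartriangle 0$. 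With (T1)--(T4) all verified, $\vartriangle$ is a $t$-norm on $I$ and the cycle closes.

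The main obstacle in this section has in fact already been cleared by the preparatory lemmas, whose common engine is the transfer identity $\chi_{\{x_1\}}\curlywedge\chi_{\{x_2\}}=\chi_{\{x_1\vartriangle x_2\}}$ of Lemma~\ref{operation-Lemma} together with the order equivalence $\chi_{\{x_1\}}\sqsubseteq\chi_{\{x_2\}}\Leftrightarrow x_1\leq x_2$ of Lemma~\ref{order-Lemma-2}; these let every algebraic property of $\vartriangle$ be read off from the corresponding property of $\curlywedge$ evaluated on characteristic functions of singletons. Once this dictionary is in place, the only remaining delicacy is the boundary behaviour of monotonicity noted above, and that is exactly where the continuity assumption on $\vartriangle$ is consumed.
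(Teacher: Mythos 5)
Your proof is correct and takes essentially the same route as the paper, which proves the theorem by simply combining Lemmas \ref{Commu-Asso-Thm-2}--\ref{Increasing-Lemma-2} (for $(3)\Rightarrow(1)$) with \cite[Proposition~14]{HCT2015} (for $(1)\Rightarrow(2)$) and the observation that every $t_r$-norm satisfies the basic axioms (for $(2)\Rightarrow(3)$). Your separate treatment of the boundary slices $y=0$ (via continuity of $\vartriangle$ and a limit) and $y=1$ (via neutrality) in verifying (T3) is a welcome patch of a point the paper leaves implicit, since Lemma \ref{Increasing-Lemma-2} is stated only for $y\in(0,1)$.
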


Similarly, one can obtain the following result.
\begin{theorem}
Let $\star$ be a continuous $t$-norm on $I$ and $\vartriangle$ be
a continuous binary operation on $I$. Then, the following statements are equivalent:
\begin{enumerate}[(1)]
  \item $\vartriangle$ is a $t$-conorm on $I$;
  \item $\curlywedge$ is a $t_{r}$-conorm on $\mathbf{L}$;
  \item $\curlywedge$ is a $t$-conorm on $\mathbf{L}$.
\end{enumerate}
\end{theorem}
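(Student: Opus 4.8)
The plan is to establish the cycle of implications $(2)\Rightarrow(3)\Rightarrow(1)\Rightarrow(2)$, mirroring the proof of the preceding $t$-norm theorem but with the neutral element $\chi_{\{1\}}$ systematically replaced by $\chi_{\{0\}}$. The implication $(2)\Rightarrow(3)$ is immediate, since every $t_{r}$-conorm satisfies axioms (O1), (O2), (O3'), and (O4), which are precisely the basic axioms defining a $t$-conorm on $\mathbf{L}$.

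For $(3)\Rightarrow(1)$, I would transfer the three structural properties of $\curlywedge$ down to $\vartriangle$ using the lemmas already available in this section. Commutativity and associativity pass from $\curlywedge$ to $\vartriangle$ directly by Lemma~\ref{Commu-Asso-Thm-2}, whose proof rests only on Lemma~\ref{operation-Lemma} and makes no use of which characteristic function serves as the neutral element. Monotonicity likewise transfers: the argument of Lemma~\ref{Increasing-Lemma-2}, built on Lemmas~\ref{order-Lemma-2} and~\ref{operation-Lemma}, invokes only the increasing axiom (O4), which holds for a $t$-conorm as well, so $\vartriangle^{r}_{y}$ is increasing for each $y$. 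The single point requiring adaptation is the neutral element: assuming (O3'), that $\chi_{\{0\}}$ is neutral for $\curlywedge$, Lemma~\ref{operation-Lemma} gives $\chi_{\{0\vartriangle x\}}=\chi_{\{0\}}\curlywedge\chi_{\{x\}}=\chi_{\{x\}}$ and symmetrically $\chi_{\{x\vartriangle 0\}}=\chi_{\{x\}}$, whence $0\vartriangle x=x\vartriangle 0=x$ for all $x\in I$. This is the exact analogue of Lemma~\ref{1-elemrnt-2} with $1$ replaced by $0$. Collecting these facts, $\vartriangle$ satisfies (T1), (T2), (T3), and (T4'), so $\vartriangle$ is a $t$-conorm on $I$.

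For $(1)\Rightarrow(2)$, the key observation is that when $\vartriangle$ is a $t$-conorm the convolution defining $\curlywedge$ in~\eqref{e-3} coincides verbatim with the operation $\curlyvee$ of~\eqref{O-2} upon taking $\triangledown=\vartriangle$, since both read $\sup\{f(y)\star g(z): y\vartriangle z=x\}$. Consequently, with $\star$ a continuous $t$-norm and $\vartriangle$ a continuous $t$-conorm, the assertion that $\curlywedge$ is a $t_{r}$-conorm on $\mathbf{L}$ is exactly the statement that $\curlyvee$ is a $t_{r}$-conorm, which is furnished by \cite[Proposition~14]{HCT2015}. In particular, the restrictive axioms (O5'), (O6), and (O7) are secured through that identification rather than verified afresh.

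I expect the main obstacle to be conceptual rather than computational: one must confirm that each lemma of this section was proved without tacitly exploiting the norm-specific neutral element $\chi_{\{1\}}$, so that it remains valid once the neutral element is switched to $\chi_{\{0\}}$, and one must check that the surjectivity hypothesis on $\vartriangle$ needed to make $\curlywedge$ well defined is automatic here, which indeed follows since $x\vartriangle 0=x$ already exhausts all of $I$. Apart from these bookkeeping points, the argument is a faithful dual of the $t$-norm theorem.
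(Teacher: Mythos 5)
Your proposal is correct and takes essentially the same route as the paper, whose entire proof is the remark ``Similarly, one can obtain the following result''---that is, dualize Lemmas~\ref{Commu-Asso-Thm-2}--\ref{Increasing-Lemma-2} with the neutral element $\chi_{\{1\}}$ replaced by $\chi_{\{0\}}$ and invoke \cite[Proposition~14]{HCT2015} for the direction $(1)\Rightarrow(2)$. Your explicit checks---that the section's lemmas never use which characteristic function is neutral, that $\curlywedge$ coincides with $\curlyvee$ under $\triangledown=\vartriangle$ when $\vartriangle$ is a $t$-conorm, and that surjectivity of $\vartriangle$ is automatic in the direction where it is not presupposed---are precisely the content the paper leaves implicit.
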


\section{Conclusion}
This paper has further studied the binary operations $\curlywedge$ and $\curlyvee$ defined in \eqref{O-1},
\eqref{O-2} and \eqref{e-3} on $\mathbf{L}$. By introducing two special families of
functions $\mathscr{W}_{x}$ and $\mathscr{V}_{x}$ ($x\in I$),
it first proves that, when the continuous $t$-norm $\vartriangle$ or continuous
$t$-conorm $\triangledown$ is fixed, the following hold:
\begin{enumerate}[(1)]
  \item $\curlywedge$ is a continuous $t_r$-norm on $\mathbf{L}$ if and only if $\curlywedge$ is a continuous $t$-norm on $\mathbf{L}$
if and only if $\star$ is a continuous $t$-norm;
  \item $\curlyvee$ is a continuous $t_r$-conorm on $\mathbf{L}$ if and only if $\curlyvee$ is a continuous $t$-conorm on $\mathbf{L}$
if and only if $\star$ is a continuous $t$-norm.
\end{enumerate}
In particular, these results negatively answer Question~\ref{Q-1}. Similarly to
Question~\ref{Q-1}, the case that the binary operation $\vartriangle$ is fixed (see \eqref{e-3})
has been investigated and some analogous results were obtained.


\end{document}